\DeclareMathAlphabet{\mathbbe}{U}{bbold}{m}{n}
\newcommand{\thg}{{\mathord{\text{--}}}}
\newcommand{\abs}[1]{{\left|{#1}\right|}}
\renewcommand{\epsilon}{\varepsilon}
\renewcommand{\phi}{\varphi}
\newcounter{Definitioncount}
\newtheorem{theorem}{Theorem}[section] 
\newtheorem{lemma}[theorem]{Lemma}
\newtheorem{proposition}[theorem]{Proposition}
\theoremstyle{definition}
\newtheorem{examples}[theorem]{Examples}
\newcommand{\spn}[1]{{\langle{#1}\rangle}}
\newtheoremstyle{fact}{\bigskipamount}{\medskipamount}{\upshape}{}{\itshape}{. }{ }{Fact}
\theoremstyle{fact}
\newtheoremstyle{genquest}{\bigskipamount}{\medskipamount}{\upshape}{}{\itshape}{. }{ }{General Question}
\theoremstyle{genquest}
\newtheoremstyle{step}{2\bigskipamount}{\medskipamount}{\upshape}{}{\itshape}{. }{ }{\underline{Step~\thestep}}
\theoremstyle{step}
\renewcommand{\thestep}{\arabic{step}}
\newcommand{\dd}{\colon}
\newcommand{\lra}{\longrightarrow}
\newcommand{\ldual}[1]{\mathord{{\let\nolimits\relax\sideset{^\wedge}{}{#1}}}}
\newcommand{\laction}[2]{\mathord{{\let\nolimits\relax\sideset{^{#1}}{}{#2}}}}
\newcommand{\conj}[2]{\mathord{{\let\nolimits\relax\sideset{^{#1}}{}{#2}}}}
\def\CA{{\mathscr A}}
\def\CB{{\mathscr B}}
\def\CC{{\mathscr C}}
\def\CD{{\mathscr D}}
\def\CE{{\mathscr E}}
\def\CM{{\mathscr M}}
\def\CP{{\mathscr P}}
\def\CR{{\mathscr R}}
\def\CV{{\mathscr V}}
\def\CX{{\mathscr X}}
\renewcommand{\phi}{\varphi}
\renewcommand{\epsilon}{\varepsilon}
\def\1c#1{\stackrel{#1}{\to}}
\DeclareMathAlphabet{\mathbf}{OT1}{cmr}{b}{n}
\begin{document}

\author{Richard Garner}
\address{Department of Mathematics, Macquarie University, NSW 2109, Australia}
\email{richard.garner@mq.edu.au}

\author{Ross Street}
\address{Department of Mathematics, Macquarie University, NSW 2109, Australia}
\email{ross.street@mq.edu.au}

\thanks{Both authors gratefully acknowledge the support of the
  Australian Research Council Discovery Grant DP130101969; Garner
  acknowledges with equal gratitude the support of an Australian
  Research Council Discovery Grant DP110102360.}

\title[Coalgebras for weighted and pointwise products]{Coalgebras governing both weighted Hurwitz products and their pointwise transforms}
\date{\today}
\subjclass[2010]{18D10; 05A15; 18A32; 18D05; 20H30; 16T30}
\keywords{Weighted derivation; Hurwitz series; monoidal category;
  Joyal species; convolution; Rota-Baxter operator.}

\maketitle

\begin{abstract}
  We give further insights into the weighted Hurwitz product and the
  weighted tensor product of Joyal species. Our first group of results
  relate the Hurwitz product to the pointwise product, including the
  interaction with Rota--Baxter operators. Our second group of results
  explain the first in terms of convolution with suitable bialgebras,
  and show that these bialgebras are in fact obtained in a
  particularly straightforward way by freely generating from pointed
  coalgebras. Our third group of results extend this from linear
  algebra to two-dimensional linear algebra, deriving the existence of
  weighted Hurwitz monoidal structures on the category of species
  using convolution with freely generated bimonoidales. Our final
  group of results relate Hurwitz monoidal structures with
  equivalences of of Dold--Kan type.
\end{abstract}


\section{Introduction}
\label{sec:introduction}

This paper continues the investigations of~\cite{wtp} into the
$\lambda$-Hurwitz products of~\cite{GKZ2008,GKZ2014}. Given a ring
$\mathrm{k}$, an element $\lambda \in \mathrm{k}$, and a
$\mathrm{k}$-algebra $A$, the \emph{$\lambda$-Hurwitz product} is a
certain multiplication $\cdot^\lambda$ on the set $A^\mathbb N$ which,
together with the pointwise linear structure, endows it with the
structure of a $\mathrm{k}$-algebra $G_\lambda A$. This algebra has a
universal role: it is the cofree $\lambda$-differential algebra on
$A$. Here, a \emph{$\lambda$-differential algebra} is a
$\mathrm{k}$-algebra equipped with a \emph{$\lambda$-weighted
  derivation}---a $\mathrm{k}$-linear endomorphism $\partial$
satisfying
\begin{equation}\label{deriv}
  \partial(1) = 0 \qquad \text{and} \qquad \partial(ab) = (\partial a)b + a(\partial b) + \lambda (\partial a) (\partial b)\rlap{ .}
\end{equation}
When $\lambda = 0$, of course, we re-find the classical notion of
derivation and differential algebra; when $\lambda \neq 0$, we have
variants on these notions apt for the study of \emph{difference}
rather than differential equations.

Our first set of results clarify the relation between the pointwise
and $\lambda$-Hurwitz products on $A^\mathbb N$. We exhibit algebra
morphisms
$\gamma \colon (A^\mathbb N, \cdot^\lambda) \rightarrow (A^\mathbb N,
\text{pointwise})$,
which are algebra \emph{isomorphisms} whenever
$\lambda \in \mathrm{k}$ is invertible. We relate these to Lagrange
interpolation, and also to weighted \emph{Rota--Baxter
  algebras}~\cite{Baxt1960,Rota1969}: these are $\mathrm{k}$-algebras
endowed with a \emph{weighted Rota--Baxter operator}---a
$\mathrm{k}$-linear endomorphism $P$ satisfying the equation:
\begin{equation}\label{RBo}
  P(a)P(b) = P(P(a)b +a P(b)+\lambda ab)\rlap{ .}
\end{equation}
Just as derivations encode abstract differentation, so weighted
Rota--Baxter operators encode abstract integration (when
$\lambda = 0$) or summation (when $\lambda \neq 0$). Following
\cite{GKZ2008,GKZ2014}, we show that, when $A$ is a weighted
Rota--Baxter algebra, $\gamma$ above lifts to a homomorphism
of Rota--Baxter algebras; in particular, when $\lambda$ is invertible,
this establishes an isomorphism between two canonical Rota--Baxter
algebra structures on $A^\mathbb N$ arising from a given one on $A$.

Our second set of results explain the first in terms of more basic
data. The assignation $A \mapsto G_\lambda A$ underlies a comonad on
the category of $\mathrm{k}$-algebras; in fact, this may be seen as
induced by convolution with a $\mathrm{k}$-bialgebra
$C(\lambda)_\infty$. There is another comonad $H$ on this category
given by $A \mapsto (A^\mathbb N, \text{pointwise})$, which is again
induced by convolution with a bialgebra $D_\infty$. Now the
$\mathrm{k}$-algebra morphism
$\gamma \colon G_\lambda A \rightarrow HA$ can be seen as induced
under convolution by a morphism of bialgebras
$D^\infty \rightarrow C(\lambda)_\infty$. In fact, more is true. The
comonads $G_\lambda$ and $H$ are both \emph{cofree} on a copointed
endofunctor; correspondingly, the bialgebras $D_\infty$ and
$C(\lambda)_\infty$ are free on pointed coalgebras $D$ and
$C(\lambda)$, and in these terms, the bialgebra morphism
$D_\infty \rightarrow C(\lambda)_\infty$ can be seen as generated by
the (much simpler) datum of a morphism of pointed coalgebras
$D \rightarrow C(\lambda)$.

The remaining contributions of this paper are concerned with
``categorifications'' of the preceding ones. Rather than considering
modules over a commutative ring $\mathrm{k}$, we consider categories
enriched over a suitable symmetric monoidal base $\CV$ admitting a
suitable class of colimits that play the role of ``addition''. Rather
than (commutative) $\mathrm{k}$-algebra structure, we consider
(symmetric) monoidal structure on our $\CV$-categories; and rather
than \emph{coalgebra} structure, we consider \emph{comonoidale}
structure in a suitable monoidal bicategory of $\CV$-categories. As
described in~\cite{wtp}, there is in this setting a
``categorification'' of the $\lambda$-Hurwitz product found on
$A^\mathbb N$ for any $\mathrm{k}$-algebra $A$ to a
\emph{$\Lambda$-Hurwitz monoidal structure} on $\CA^\mathfrak S$
(where $\mathfrak S$ is the category of finite sets and bijections)
for any ``$\CV$-algebra'' $\CA$. Inspired by the constructions of the
preceding sections, our third set of results exhibit this
$\Lambda$-Hurwitz monoidal structure as induced by convolution with a
``$\CV$-bialgebra'', and show that this bialgebra may in fact be
obtained as the free bialgebra generated by a pointed $\CV$-coalgebra.
Analogously to before, we obtain a comparison between the Hurwitz
monoidal structure on $\CA^\mathfrak S$ and the pointwise one, which,
once again, may be seen as freely generated from a morphism of pointed
$\CV$-coalgebras.

The final set of results in this paper explain the link between
categorified Hurwitz tensor products and \emph{equivalences of
  Dold--Kan type}. The classical Dold--Kan equivalence is that between
simplicial abelian groups and chain complexes of abelian groups, the
simpler direction of which is the functor
$N \colon [\Delta^\mathrm{op}, \mathbf{Ab}] \rightarrow \mathbf{Ch}$
sending each simplicial abelian group to its normalized Moore complex.
However, when we equip $[\Delta^\mathrm{op}, \mathbf{Ab}]$ with the
pointwise tensor product, and $\mathbf{Ch}$ with its classical tensor
product, the functor $N$ is \emph{not} strong monoidal, though it is
lax and oplax monoidal in a compatible manner:
see~\cite[Chapter~5]{AguiarMahajan2010}. This means that transporting
the pointwise monoidal structure on
$[\Delta^\mathrm{op}, \mathbf{Ab}]$ across this equivalence yields a
new tensor product on chain complexes, and as we will see, the formula
for this is precisely the $\lambda = 1$ case of a Hurwitz-style
tensor\footnote{The explicit calculation of this tensor product
  appears in unpublished work of Lack and Hess.}.

In fact, we will show something more general than this. Recent work
such as~\cite{Pirashvili2000Dold-Kan,Slominska,Church2015FI-modules}
has established various generalisations of the classical Dold--Kan
equivalence; in~\cite{Lack2015Combinatorial} is described a general
framework for obtaining such equivalences, which, starting from a
category $\CP$ equipped with suitable extra structure, derives a
category with zero morphisms $\CD$ and an equivalence of functor
categories $[\CP, \mathbf{Ab}]\simeq [\CD, \mathbf{Ab}]_\mathrm{pt}$
(here the subscript ``pt'' indicates the restriction to zero-map
preserving functors). Our fourth main result shows that, in this
setting, the pointwise tensor product on $[\CP,\mathbf{Ab}]$ always
transports to a Hurwitz-style monoidal structure on
$[\CD, \mathbf{Ab}]_\mathrm{pt}$; while our fifth and final result
shows that certain important examples of equivalences arising in this
way, may, as before, be seen as induced by convolution with a map of
``$\mathbf{Ab}$-bialgebras'' freely generated from a map of pointed
$\mathbf{Ab}$-coalgebras.

\section{Preliminaries}

Throughout this paper, $\mathrm{k}$ will be a commutative
$\mathbb{Q}$-algebra. Given natural numbers $n, m_1, \dots, m_r$, we
define the \emph{multinomial coefficient}
\begin{align*}
\smash{\binom{n}{m_1,\dots,m_r}} = \frac{n!}{m_1!\cdots m_r!} \ .
\end{align*}
Usually this is for $\Sigma_i m_i =n$, so that this coefficient gives the
number of ways of partitioning a set of cardinality $n$ into disjoint
subsets of cardinalities $m_1, \dots, m_r$. We extend this definition
to all integers by declaring $k!$ to be zero for any
$k < 0 \in \mathbb Z$. Of course, we write $\binom{n}{r}$ as usual for
$\binom{n}{r,n-r}$; more generally, for a $\mathrm{k}$-algebra $A$ and
$x \in A$, we define the \emph{binomial coefficients} of $x$ to be the
elements of $A$ given by:
\begin{align*}
  \binom{x}{0} = 1 \qquad \text{and} \qquad 
  \binom{x}{r} = \frac{x(x-1)\cdots (x-r+1)}{r!} \quad \text{ for $0 < r \in \mathbb N$ .}
\end{align*}

The following elementary result is classical.

\begin{lemma}\label{combident}
  Let $p, q \in \mathbb N$.
  \begin{enumerate}[(i)]
  \item If $n \in \mathbb N$, then
    \begin{equation*}
      \binom{n}{p}\binom{n}{q} =\sum_{\substack{u+r+s+t=n\\ p=r+t, \, q=s+t}}{\binom{n}{u,r,s,t}}\rlap{ ;}
    \end{equation*}
  \item If $A$ is a $\mathrm{k}$-algebra and $x \in A$, then
    \begin{equation*}
      \binom{x}{p}\binom{x}{q} =\sum_{t}{\binom{p+q-t}{p-t,q-t,t}\binom{x}{p+q-t}}\rlap{ .}
    \end{equation*}
  \end{enumerate}
\end{lemma}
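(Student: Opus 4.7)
The plan is to prove (i) by a direct combinatorial argument, and then derive (ii) from (i) by a polynomial identity argument, using that $\mathrm{k} \supseteq \mathbb{Q}$.

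For (i), I would interpret both sides as counting pairs $(P,Q)$ of subsets of $\{1,\ldots,n\}$ with $|P| = p$ and $|Q| = q$. The left-hand side does this directly. For the right-hand side, I partition the set of such pairs according to the four cardinalities $t = |P \cap Q|$, $r = |P \setminus Q|$, $s = |Q \setminus P|$, and $u = |(P \cup Q)^{c}|$: these necessarily satisfy $u + r + s + t = n$, $p = r+t$, and $q = s + t$, and given such a quadruple the number of pairs realising it is precisely $\binom{n}{u,r,s,t}$. Summing over admissible quadruples gives the claimed identity.

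For (ii), I first rewrite (i) by eliminating $u = n - p - q + t$ and factoring the multinomial:
\begin{equation*}
\binom{n}{u,r,s,t} \;=\; \frac{n!}{(n-p-q+t)!\,(p+q-t)!}\cdot\frac{(p+q-t)!}{(p-t)!(q-t)!\,t!} \;=\; \binom{n}{p+q-t}\binom{p+q-t}{p-t,q-t,t}\rlap{ ,}
\end{equation*}
so that (i) becomes the identity of (ii) specialised to $x = n \in \mathbb{N}$ (all terms with negative factorial arguments vanish by convention). Now both sides of (ii) are $\mathbb{Q}$-linear combinations of products of the shifted factors $x, x-1, \ldots$, hence are polynomials in $x$ of degree at most $p+q$ with rational coefficients. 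Since they agree on every natural number, they coincide as elements of $\mathbb{Q}[x]$. Evaluating this polynomial identity at any $x$ in any commutative $\mathbb{Q}$-algebra extension $\mathrm{k} \to A$ yields the desired equality in $A$ (commutativity of $A$ is not required since only a single variable $x$ is involved).

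I do not expect any real obstacle: (i) is a textbook double-counting, and (ii) is then a routine rearrangement followed by the standard polynomial-identity principle. The only place where care is warranted is in checking that the extension of factorials to negative integers by zero makes the index sets in (i) and (ii) match up correctly, so that no spurious terms are introduced when passing between the two forms.
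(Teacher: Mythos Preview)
Your argument for (i) is exactly the paper's: the bijection between pairs $(P,Q)$ and ordered decompositions $X=U+R+S+T$ with $P=R+T$, $Q=S+T$ is precisely what the paper writes down.

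For (ii) the two routes diverge. The paper establishes the natural-number case from the same bijections and then says the general case follows by a straightforward induction on $p$. You instead rewrite (i) into the shape of (ii), observe that both sides lie in $\mathbb{Q}[x]$ with degree at most $p+q$, and invoke the polynomial-identity principle (agreement on all of $\mathbb{N}$ forces equality in $\mathbb{Q}[x]$, hence in any $\mathrm{k}$-algebra after evaluation). This is entirely correct and arguably cleaner: it sidesteps any bookkeeping with Pascal-type recursions and uses only that $\mathbb{Q}\subseteq\mathrm{k}$, which the paper assumes globally. Your remark that commutativity of $A$ is irrelevant because only a single element $x$ is involved is well taken. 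The paper's induction has the minor advantage of not appealing to the infinitude of $\mathbb{N}$, but in this context that is no real gain.
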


\begin{proof}
  For a finite set $X$, we have bijections between any two of the sets
  \begin{align*}
    \{\,P,Q \subseteq X & : \abs P=p,\abs Q=q\,\} \ ,\\
    \{\,U,R,S,T \subseteq X & : X =U+R+S+T, \abs {R+T}=p,\abs {S+T}=q\,\} \ ,\\
    \text{and}\quad \{\,W,R,S,T \subseteq X & : W = R+S+T, \abs{R+T}=p,\abs{S+T}=q\,\} \ .
  \end{align*}
  This proves (i) and also (ii) in the case where $x \in \mathbb N$. The
  general case of (ii) proceeds by a straightforward induction on $p$.
\end{proof}

We will also require the following simple combinatorial identity.

\begin{lemma}\label{combseq}
  For any $\mathrm{k}$-algebra $A$ and sequence $a_0, a_1, \dots$ in
  $A$, it holds that:
  \begin{equation*}
    a_n = \sum_{r+s+t=n}{\binom{n}{r,s,t} (-1)^sa_t} \ .
  \end{equation*}
\end{lemma}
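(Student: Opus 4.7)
The plan is to evaluate the right-hand side by grouping terms according to the index $t$, which is the only one that actually touches the sequence. For fixed $t \in \{0, 1, \dots, n\}$, the terms with $a_t$ on the right-hand side are
\begin{equation*}
  \Bigl(\sum_{\substack{r+s=n-t}} \binom{n}{r,s,t}(-1)^s\Bigr) a_t \rlap{ ,}
\end{equation*}
so the identity reduces to showing that the bracketed scalar equals $\delta_{t,n}$.

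To evaluate this coefficient, I would factor out $\binom{n}{t} = \frac{n!}{t!(n-t)!}$ from each multinomial via the identity $\binom{n}{r,s,t} = \binom{n}{t}\binom{n-t}{s}$ (valid since $r = n-t-s$), reducing the inner sum to
\begin{equation*}
  \binom{n}{t}\sum_{s=0}^{n-t}\binom{n-t}{s}(-1)^s = \binom{n}{t}(1-1)^{n-t} \rlap{ .}
\end{equation*}
This vanishes whenever $n - t > 0$ and equals $1$ when $t = n$, so only the $t = n$ term of the original sum survives and contributes precisely $a_n$.

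There is no substantive obstacle here: the identity is a one-variable statement about the scalar coefficients, independent of the algebra $A$ and of the actual values of the sequence. The only minor subtlety is to be careful that we are summing over triples of non-negative integers (so the convention $k! = 0$ for $k < 0$ from the preamble correctly kills the out-of-range terms), but once that is noted, the above two lines constitute the entire argument.
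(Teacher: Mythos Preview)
Your proof is correct, but it takes a different route from the paper's. The paper introduces the suspension operator $\Sigma \colon A^{\mathbb N} \to A^{\mathbb N}$ with $\Sigma(a)_n = a_{n+1}$, observes that $1$, $-1$, and $\Sigma$ commute, and expands the operator identity $\Sigma^{\circ n} = (1 - 1 + \Sigma)^{\circ n}$ trinomially; evaluating at $a$ and then at $0$ gives the result in one stroke. Your argument instead fixes $t$, factors the multinomial as $\binom{n}{r,s,t} = \binom{n}{t}\binom{n-t}{s}$, and collapses the inner sum via $(1-1)^{n-t}$. Your approach is the more elementary one---it stays purely at the level of scalar coefficients and needs no auxiliary operators---while the paper's version is a one-line conceptual trick that explains \emph{why} the identity holds (it is literally $\Sigma = 1 - 1 + \Sigma$ raised to the $n$th power). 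Both are perfectly adequate for this lemma.
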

\begin{proof}
  Let $\Sigma \dd A^{\mathbb{N}}\to A^{\mathbb{N}}$ be the suspension
  operator $\Sigma(a)_n=a_{n+1}$. As the operators $1, -1, \Sigma$
  commute with each other, we have by the trinomial formula that
  \begin{equation*}
    \Sigma^{\circ n} = (1-1+\Sigma)^{\circ n}= \sum_{r+s+t=n}{\binom{n}{r,s,t} 1^{\circ r} \circ (-1)^{\circ s} \circ \Sigma^{\circ t}} \ .
  \end{equation*}
  Applying this operator identity to $a\in A^{\mathbb{N}}$ and
  evaluating at $0$ yields the result.
\end{proof}

\section{The $\lambda$-weighted Hurwitz product}\label{wHp}

Throughout this section, we fix $\lambda \in \mathrm{k}$ and a
$\mathrm{k}$-algebra $A$. The {\em $\lambda$-weighted Hurwitz product}
on $A^{\mathbb{N}}$ \cite[\S 2.3]{GKZ2008} is defined by the equation
\begin{equation}\label{LambHurw}
  (a\cdot^{\lambda}b)_n=\sum_{n=r+s+t}{\binom{n}{r,s,t}\lambda^t a_{r+t} b_{s+t}} \ .
\end{equation}
This product has $(1,0,0,\dots)$ as neutral element; taken together
with the pointwise $\mathrm{k}$-linear structure we obtain a
$\mathrm{k}$-algebra $(A^{\mathbb{N}},\cdot^{\lambda})$, which is
commutative whenever $A$ is so. This formula restricts to $A^{\ell}$
regarded as the linear subspace of $A^{\mathbb{N}}$ comprising those
$a$ with $a_n=0$ for $n\geqslant \ell$; moreover, we can recapture
$A^\mathbb N$ and its algebra structure from the $A^\ell$'s as the
limit of the chain
$\dots \rightarrow A^n \rightarrow \dots \rightarrow A^1 \rightarrow
A^0$,
where each map $A^{j+1} \rightarrow A^{j}$ sets $a_j$ to zero. In
light of this, we may consider that $A^{\mathbb{N}}=A^{\ell}$ for
$\ell = \infty$.

Our first result relates the $\lambda$-weighted algebra structure on
each $A^\ell$ to the pointwise one. Its first part allows us to focus
attention on the case $\lambda = 1$ as occurring in \cite{Rota1969};
the second reduces that to the pointwise case.

\begin{proposition}
  \label{prop:1}
  Let $\ell \in \mathbb N \cup \{\infty\}$.
  \begin{enumerate}[(i)]
  \item There is a $\mathrm{k}$-algebra morphism
    $\hat{\lambda} \colon (A^\ell, \cdot^\lambda) \rightarrow (A^\ell,
    \cdot^1)$ defined by $\hat{\lambda}(a)_n=\lambda^na_n$;
  \item There is a $\mathrm{k}$-algebra isomorphism
    $\theta_{\ell}\dd (A^{\ell},\cdot^{1})\rightarrow
    (A^{\ell},\mathrm{pointwise})$ defined by
    \begin{equation*}
      \theta_{\ell}(a)_n = \sum_{m}{\binom{n}{m}a_m }\rlap{ ;}
    \end{equation*}
  \item There is a $\mathrm{k}$-algebra morphism
    $\gamma_\ell \colon (A^\ell, \cdot^\lambda) \rightarrow (A^\ell,
    \text{pointwise})$ defined by
    \begin{equation*}
      \gamma_{\ell}(a)_n = \sum_{m}{\binom{n}{m}\lambda^m a_m }\rlap{ ,}
    \end{equation*}
    which is an isomorphism whenever $\lambda$ is invertible in
    $\mathrm{k}$.
  \end{enumerate}
\end{proposition}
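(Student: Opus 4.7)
The plan is to prove the three parts in the order stated, with part (iii) arising as an immediate composition of the first two. Throughout, the linearity and well-definedness (including for $\ell=\infty$, where each $n$-th component of each map depends only on finitely many input components) are automatic, so attention goes to unit/multiplicativity and, for (ii), to invertibility.

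For (i), $\hat\lambda$ clearly preserves the unit. For multiplicativity, expanding both sides directly from the definitions yields
\begin{equation*}
\hat\lambda(a\cdot^\lambda b)_n = \sum_{r+s+t=n}\binom{n}{r,s,t}\lambda^{n+t} a_{r+t}b_{s+t}
\end{equation*}
on the left, and $\sum_{r+s+t=n}\binom{n}{r,s,t}\lambda^{r+t}\lambda^{s+t}a_{r+t}b_{s+t}$ on the right; these coincide because $r+s+2t = n+t$ whenever $r+s+t=n$. This is the shortest part.

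For (ii), the unit is preserved since $\binom{n}{0}=1$. The content lies in multiplicativity: expand $(\theta_\ell(a)\cdot\theta_\ell(b))_n$ as the pointwise product $\sum_{p,q}\binom{n}{p}\binom{n}{q}a_pb_q$ and apply Lemma~\ref{combident}(i) to rewrite $\binom{n}{p}\binom{n}{q}$ as a sum of multinomials $\binom{n}{u,r,s,t}$ with $p=r+t$, $q=s+t$; then expand $\theta_\ell(a\cdot^1 b)_n$ using the trinomial identity $\binom{n}{m}\binom{m}{r,s,t}=\binom{n}{n-m,r,s,t}$ and reindex by $u=n-m$. Both sides collapse to
\begin{equation*}
\sum_{u+r+s+t=n}\binom{n}{u,r,s,t}a_{r+t}b_{s+t}\rlap{ .}
\end{equation*}
For invertibility, I would exhibit the explicit inverse $\theta_\ell^{-1}(b)_n=\sum_m(-1)^{n-m}\binom{n}{m}b_m$. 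That $\theta_\ell^{-1}\circ\theta_\ell=\mathrm{id}$ follows from Lemma~\ref{combseq}, which, after the substitution $m=r+t$, $s=n-m$ and the factorisation $\binom{n}{r,s,t}=\binom{n}{m}\binom{m}{t}$, is precisely the identity $\theta_\ell^{-1}(\theta_\ell(a))_n=a_n$; the other composite follows by the analogous binomial-inversion argument.

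Part (iii) is then a free corollary: direct comparison of formulas gives $\gamma_\ell=\theta_\ell\circ\hat\lambda$, so it is a $\mathrm{k}$-algebra morphism by (i) and (ii). When $\lambda$ is invertible, $\hat\lambda$ is itself invertible with inverse $a\mapsto(\lambda^{-n}a_n)_n$, so $\gamma_\ell^{-1}=\hat\lambda^{-1}\circ\theta_\ell^{-1}$ exists and $\gamma_\ell$ is an isomorphism. The main obstacle is the combinatorial bookkeeping in (ii), but Lemmas~\ref{combident}(i) and~\ref{combseq} are designed precisely to carry out the required manipulations of multinomial sums, so once the substitutions are made the argument unwinds without further difficulty.
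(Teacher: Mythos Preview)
Your proposal is correct and follows essentially the same approach as the paper: part (i) is identical, part (ii) uses Lemma~\ref{combident}(i) for multiplicativity and Lemma~\ref{combseq} for the explicit inverse, and part (iii) is the composite $\theta_\ell\circ\hat\lambda$. The only cosmetic difference is that the paper first observes $\theta_\ell$ is upper-triangular with $1$'s on the diagonal (so invertibility is immediate) before writing down the explicit inverse, whereas you verify both composites directly; either route is fine.
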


\begin{proof}
  (iii) is immediate from (i) and (ii). For (i), clearly $\hat \lambda$ is linear and preserves the multiplicative
  unit; we conclude since
  \begin{align*}
    \hat \lambda(a \cdot^\lambda b)_n 
    & = \sum_{n=r+s+t}{\binom{n}{r,s,t}\lambda^{n+t} a_{r+t} b_{s+t}}\\
    & = \sum_{n=r+s+t}{\binom{n}{r,s,t}\lambda^{r+t} a_{r+t} \lambda^{s+t} b_{s+t}} 
    = ({\hat \lambda} a \cdot^1 {\hat \lambda} b)_n\rlap{ .}
  \end{align*}
  For (ii), clearly $\theta_\ell$ is linear and preserves $1$; for 
  the multiplication, we calculate that:
  \begin{align*}
    \theta_{\ell}(a\cdot^{1}b)_n 
    & = \sum_{m}\,{\sum_{r+s+t=m}{\binom{n}{m}\binom{m}{r,s,t} a_{r+t} b_{s+t}}} \\
    & = \sum_{u+r+s+t=n}{\binom{n}{u,r,s,t} a_{r+t} b_{s+t}} \\
    &= \sum_{p,q=0}^n{\binom{n}{p}\binom{n}{q} \ a_{p} b_{q}} 
    = (\theta_{\ell}(a) \theta_{\ell}(b))_n
  \end{align*}
  using Lemma~\ref{combident} at the third step. Now $\theta_{\ell}$
  is invertible since it is represented on the standard basis of
  $A^{\ell}$ by a triangular matrix with $1$'s along the main
  diagonal; a direct calculation hinging on Lemma~\ref{combseq} shows that
  an explicit inverse $\bar{\theta}_{\ell}$ is given by
  \begin{equation*}
    \bar{\theta}_{\ell}(a)_m = \sum_{n}{\binom{m}{n}(-1)^{m-n}a_n} \ . \qedhere
  \end{equation*}
\end{proof}

We may also understand the ring isomorphism of
Proposition~\ref{prop:1}(ii) in terms of Lagrange interpolation. Write
$A[x]$ for the $\mathrm{k}$-algebra of polynomials in indeterminate
$x$ with coefficients in $A$. For each natural number $\ell$, we
consider the quotient ring $A[x]/\binom{x}{\ell}$. In this ring we
have $\binom{x}{n}=0$ for all $n\geq \ell$. When $\ell = \infty$, we
define $A[x] / \binom{x}{\ell}$ to be the limit of the sequence of
quotient maps
$\cdots \rightarrow A[x] / \binom{x}{n} \rightarrow \cdots \rightarrow
A[x] / \binom{x}{0}$.

\begin{proposition}
  Let $\ell \in \mathbb N \cup \{\infty\}$.
  \begin{enumerate}[(i)]
  \item There is a $\mathrm{k}$-algebra morphism
    $\psi_{\ell}\dd (A^{\ell},\cdot^{1})\lra A[x]/\binom{x}{\ell}$ defined
    by
    \begin{equation*}
      \psi_{\ell}(a) = \sum_{n}{a_n \binom{x}{n}} \rlap{ ;}
    \end{equation*}
  \item There is a $\mathrm{k}$-algebra isomorphism
    $\phi_{\ell} \colon A[x]/\binom{x}{\ell} \lra
    (A^{\ell},\mathrm{pointwise})$
    defined by $\phi_{\ell}(f)_n = f(n)$ for
    $0 \leqslant n \leqslant \ell$. Moreover, the following triangle
    commutes, implying $\psi_{\ell}$ invertible.
    \begin{equation*}
      \xymatrix{
        (A^{\ell},\cdot^{1}) \ar[rd]_{\theta_{\ell}}\ar[rr]^{\psi_{\ell}} && A[x]/\binom{x}{\ell} \ar[ld]^{\phi_{\ell}} \\
        & (A^{\ell},\mathrm{pointwise}) &
      }
    \end{equation*}
  \end{enumerate}
\end{proposition}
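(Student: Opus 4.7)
The plan is to establish (i) by direct verification that $\psi_\ell$ preserves multiplication, using Lemma~\ref{combident}(ii); to establish (ii) by identifying $\phi_\ell$ as a Lagrange-interpolation-style isomorphism; and finally to verify the triangle by substitution, which then forces $\psi_\ell$ to be invertible as well.

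For (i), $A$-linearity and preservation of the unit (since $\binom{x}{0}=1$) are immediate. For multiplicativity I would expand
\[
\psi_\ell(a)\,\psi_\ell(b) \;=\; \sum_{p,q} a_p\,b_q \binom{x}{p}\binom{x}{q}
\]
and apply Lemma~\ref{combident}(ii) to rewrite each $\binom{x}{p}\binom{x}{q}$ as $\sum_t \binom{p+q-t}{p-t,q-t,t}\binom{x}{p+q-t}$. The substitution $r=p-t$, $s=q-t$, $n=r+s+t$ then reorganises the resulting triple sum into
\[
\sum_n \Bigl(\,\sum_{r+s+t=n}\binom{n}{r,s,t}\,a_{r+t}\,b_{s+t}\Bigr)\binom{x}{n}\rlap{ ,}
\]
which by \eqref{LambHurw} is exactly $\psi_\ell(a\cdot^1 b)$.

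For (ii), the evaluation map is well-defined on the quotient because $\binom{n}{\ell}=0$ for $0\leq n<\ell$, so $\binom{x}{\ell}$ lies in the kernel of each evaluation at such $n$; it is obviously a ring homomorphism into the pointwise structure. To see it is an isomorphism for finite $\ell$, I would use that $\{\binom{x}{m}\}_{0\leq m<\ell}$ is a free $A$-basis of $A[x]/\binom{x}{\ell}$ (since the change of basis from this to $\{x^m\}$ is unitriangular, and $\ell!$ is invertible as $\mathrm{k}\supseteq\mathbb Q$), and observe that the matrix of $\phi_\ell$ with respect to this basis and the standard basis of $A^\ell$ has $(n,m)$-entry $\binom{n}{m}$---a lower unitriangular matrix, hence invertible. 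The case $\ell=\infty$ then follows by passing to the inverse limit, since the $\phi_\ell$ are compatible with the defining projections on both sides.

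For the triangle, direct substitution gives $(\phi_\ell\psi_\ell(a))_n=\psi_\ell(a)(n)=\sum_m a_m\binom{n}{m}=\theta_\ell(a)_n$, so $\phi_\ell\psi_\ell=\theta_\ell$. Since Proposition~\ref{prop:1}(ii) gives $\theta_\ell$ invertible and (ii) gives $\phi_\ell$ invertible, $\psi_\ell=\phi_\ell^{-1}\theta_\ell$ is also an isomorphism. I do not foresee any real obstacle: the substantive combinatorial input is already isolated in Lemma~\ref{combident}(ii), and everything else is bookkeeping, with the $\ell=\infty$ case inheriting from the finite ones via the limit.
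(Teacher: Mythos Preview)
Your argument is correct. Part (i) is essentially the paper's computation run in the opposite direction: the paper starts from $\psi_\ell(a\cdot^1 b)$ and expands towards $\psi_\ell(a)\psi_\ell(b)$, while you start from the product $\psi_\ell(a)\psi_\ell(b)$ and collapse it using Lemma~\ref{combident}(ii); the substitutions are identical.

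Part (ii), however, is handled differently. The paper does not argue via a unitriangular change of basis; instead it invokes the Chinese Remainder Theorem: the pairing of the quotient maps $A[x]\to A[x]/(x-n)$ for $0\le n<\ell$ is surjective with kernel generated by $x(x-1)\cdots(x-\ell+1)$, and this is exactly (up to the invertible $1/\ell!$) the ideal $\bigl(\binom{x}{\ell}\bigr)$. Your route is more elementary and self-contained---it reuses the same triangular-matrix idea that already proved $\theta_\ell$ invertible in Proposition~\ref{prop:1}(ii), and avoids appealing to an external structure theorem---whereas the CRT approach makes the ring-theoretic content of the isomorphism (namely, that $A[x]/\binom{x}{\ell}$ splits as a product over the evaluation points) more transparent. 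Both pass to the limit for $\ell=\infty$ in the same way, and the verification of the triangle is identical.
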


\begin{proof}
  For (i), $\psi_\ell$ is clearly linear and preserves $1$; we
  conclude since
  \begin{align*}
    \psi_{\ell}(a\cdot^{1}b) 
    & = \sum_{n}{\sum_{r+s+t=n}{\binom{n}{r,s,t} a_{r+t} b_{s+t}} \binom{x}{n}} \\
    &= \sum_{r,s,t}{\binom{r+s+t}{r ,s,t}\binom{x}{r+s+t} a_{r+t} b_{s+t}} \\
    &= \sum_{p,q,t}{\binom{p+q-t}{p-t,q-t,t}\binom{x}{p+q-t} a_{p} b_{q}} \\
    &= \sum_{p,q}{a_{p} b_{q}\binom{x}{p}\binom{x}{q} } 
    = \psi_{\ell}(a) \psi_{\ell}(b)
  \end{align*}
  using Lemma~\ref{combident} at the fourth step. When $\ell < \infty$,
  (ii) follows since, by the Chinese Remainder Theorem for rings (see
  \cite{JacobsonBAII} for example), the homomorphism
  \begin{equation*}
    A[x] \rightarrow \prod_{0\le n< \ell}{A[x]/(x-n)} \cong (A^\ell, \text{pointwise})
  \end{equation*}
  obtained by pairing together the canonical quotient maps is itself a
  quotient, with as kernel the ideal generated by
  $x(x-1) \cdots (x - \ell + 1)$. The case $\ell = \infty$ follows on
  passing to the limit.
\end{proof}

We now relate these results to Rota--Baxter operations. As in the
introduction, a {\em Rota--Baxter operator of weight $\lambda$} on a
$\mathrm{k}$-algebra $A$ is a $k$-linear map $P\dd A \to A$
satisfying~\eqref{RBo}. Note that the zero operator is always a
Rota--Baxter operator.

\begin{proposition}\label{lift} 
  Let $\ell \in \mathbb N \cup \{\infty\}$. Each Rota--Baxter operator
  $P$ of weight $\lambda$ on $A$ lifts to one $\bar P$ on
  $(A^\ell, \cdot^\lambda)$, as defined left below, and to one
  $\tilde P$ on $(A^\ell, \text{pointwise})$, as defined right below.
  \begin{equation*}
    \bar P(a)_n =
    \begin{cases}
      P(a_0) & \text{for $n=0$;} \\
      a_{n-1} & \text{for $n>0$,}
    \end{cases}
    \qquad \qquad 
    \tilde P(a)_n = P(a_0) + \lambda \sum_{i < n} a_i\rlap{ .}
  \end{equation*}
  Moreover, the map $\gamma_\ell$ of Proposition~\ref{prop:1}(iii) is a
  map of Rota--Baxter algebras in the sense that
  $\gamma_\ell \bar P = \tilde P \gamma_\ell$.
\end{proposition}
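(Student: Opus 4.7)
The proposition asserts three distinct claims, which I would dispatch in order of increasing difficulty. First, the assertion that $\tilde P$ is a Rota--Baxter operator of weight $\lambda$ on $(A^\ell, \text{pointwise})$ can be verified at each index $n$ independently, since both $\tilde P$ and the pointwise product act componentwise. Expanding the product $\tilde P(a)_n \tilde P(b)_n = \bigl(P(a_0) + \lambda\sum_{i<n} a_i\bigr)\bigl(P(b_0) + \lambda\sum_{j<n} b_j\bigr)$ and correspondingly expanding $\tilde P(\tilde P(a)b + a\tilde P(b) + \lambda ab)_n$, the identity reduces, after cancellation of the obvious matching terms, to the Rota--Baxter relation~\eqref{RBo} for $P$ applied to $(a_0, b_0)$, together with the trichotomy decomposition of the double sum $\sum_{i,j<n} a_i b_j$ into its diagonal, upper-triangular, and lower-triangular parts.

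Second, for the compatibility $\gamma_\ell \bar P = \tilde P \gamma_\ell$, I would substitute $\gamma_\ell(a)_n = \sum_m \binom{n}{m} \lambda^m a_m$ from Proposition~\ref{prop:1}(iii) into both sides. On the left, the $m=0$ term contributes $P(a_0)$ and the $m \geq 1$ terms, after reindexing $k = m-1$, yield $\sum_{k \geq 0} \binom{n}{k+1}\lambda^{k+1} a_k$; on the right, expanding and swapping the order of summation gives $P(a_0) + \sum_m \lambda^{m+1} a_m \sum_{i<n} \binom{i}{m}$. The two expressions now coincide by the hockey-stick identity $\sum_{i=0}^{n-1} \binom{i}{m} = \binom{n}{m+1}$.

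The remaining claim, that $\bar P$ is a Rota--Baxter operator of weight $\lambda$ on the Hurwitz algebra, requires the bulk of the work. At the $0$-th component, both sides evaluate to $P(a_0) P(b_0)$ by the Rota--Baxter relation for $P$. For $n \geq 1$, I would expand $(\bar P(a) \cdot^\lambda \bar P(b))_n$ via~\eqref{LambHurw} and isolate the boundary summands where $r+t = 0$ (yielding $P(a_0) b_{n-1}$) and where $s+t = 0$ (yielding $a_{n-1} P(b_0)$), leaving an interior sum in which $\bar P(a)_{r+t} = a_{r+t-1}$ and $\bar P(b)_{s+t} = b_{s+t-1}$. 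On the other side, I would expand each of the three summands of $\bar P\bigl(\bar P(a)\cdot^\lambda b + a\cdot^\lambda \bar P(b) + \lambda a\cdot^\lambda b\bigr)_n$ by pulling the outermost $\bar P$ through as a shift, and split each such expansion according to whether $t \geq 1$ or $t = 0$ (with the attendant positivity constraint on $r$ or $s$); the boundary contributions again assemble to $P(a_0)b_{n-1}$ and $a_{n-1}P(b_0)$.

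After reindexing the remaining interior sums to triples summing to $n-1$ and to $n-2$ respectively, the equality of the two interior contributions reduces, case by case, to Pascal's recursion for the multinomial coefficient
\begin{equation*}
\binom{n}{r,s,t} = \binom{n-1}{r-1,s,t} + \binom{n-1}{r,s-1,t} + \binom{n-1}{r,s,t-1}\rlap{ ,}
\end{equation*}
applied once to absorb a factor of $\lambda$ from the weight-$\lambda$ summand and once more to match the $t=0$ boundary. The main obstacle is purely combinatorial bookkeeping: no new idea is needed beyond Pascal's identity, but one must choose reindexings that align the three pieces on the right with the single interior sum on the left, and track carefully the boundary cases in which one or more of $r,s,t$ vanishes.
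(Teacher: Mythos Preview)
Your proposal is correct, but its overall strategy differs from the paper's. For the claim that $\bar P$ is a Rota--Baxter operator on $(A^\ell, \cdot^\lambda)$, the paper simply cites~\cite[Proposition~3.8]{GKZ2014}, whereas you reprove this directly via the multinomial Pascal recursion; your sketch is sound and is essentially the argument in that reference.

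The more interesting divergence is in the other two claims. You attack both by brute-force expansion: trichotomy of the double sum for $\tilde P$, and the hockey-stick identity for the compatibility $\gamma_\ell \bar P = \tilde P \gamma_\ell$. The paper instead introduces the forward-difference operator $\partial(a)_n = a_{n+1} - a_n$ on $(A^\ell, \text{pointwise})$ and exploits two structural facts: that $\partial \circ \tilde P = \lambda \cdot \mathrm{id}$, and that $\partial$ is a $1$-weighted derivation for the pointwise product. Since two sequences agree iff their $0$-components and their $\partial$'s agree, both verifications reduce to a check at index~$0$ (which uses the Rota--Baxter identity for $P$) together with a one-line computation after applying $\partial$; the compatibility check then uses only the single-step Pascal identity $\binom{n+1}{m} - \binom{n}{m} = \binom{n}{m-1}$ rather than the summed hockey-stick form. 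Your approach is entirely self-contained and elementary, while the paper's difference-operator device is shorter, treats the two remaining claims uniformly, and foreshadows the role of $\partial$ in the comonadic interpretation of Section~\ref{sec:comonadic-aspects}.
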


\begin{proof}
  $\bar P$ is a Rota--Baxter operator
  by~\cite[Proposition~3.8]{GKZ2014}. For $\tilde P$, consider the
  difference operator $\partial \colon A^\ell \rightarrow A^\ell$
  defined by $\partial(a)_n = a_{n+1} - a_n$ (taking $a_{\ell+1} = 0$
  when $\ell < \infty$). We have $\partial \circ \tilde P = \lambda$
  and
  $\partial(ab) = (\partial a)b + a(\partial b) + (\partial
  a)(\partial b)$
  under the pointwise product. Clearly $a = b \in A^\mathbb N$ just
  when $a_0 = b_0$ and $\partial(a) = \partial(b)$; thus, since
  \begin{gather*}
    \tilde P(a)\tilde P(b)_0 = P(a_0)P(b_0) = P(P(a_0)b_0 + a_0P(b_0) + \lambda a_0b_0) = 
    P(P(a)b + aP(b) + \lambda ab)_0 \\
    \text{and} \quad \partial(\tilde P(a)\tilde P(b)) = \lambda a \tilde P(b) + \lambda \tilde P(b) a + \lambda^2 ab = \partial \tilde P(a \tilde P(b) + \tilde P(b) a + \lambda ab)
  \end{gather*}
  we conclude that $\tilde P$ is a Rota--Baxter operator as required.
  To see that $\gamma_\ell$ is a map of Rota--Baxter algebras, we
  calculate similarly that
  $\gamma_\ell \bar P(a)_0 = P(a_0) = \tilde P \gamma_\ell(a)_0$, and
  that
  \begin{align*}
    \partial \gamma_\ell\bar P(a)_n & = \sum_m \binom{n+1}{m} \lambda^m (\bar Pa)_m - \sum_m \binom{n}{m} \lambda^m (\bar Pa)_m \\
    & =\sum_m \binom{n}{m-1} \lambda^m (\bar Pa)_m = \sum_{m} \binom{n}{m} \lambda^{m+1} (\bar Pa)_{m+1} \\
    & =\lambda \sum_m \binom{n}{m} \lambda^m a_m = \lambda \gamma_\ell(a)_n = \partial \tilde P \gamma_\ell(a)_n\rlap{ .}\qedhere
  \end{align*}
\end{proof}

\section{Comonadic aspects}
\label{sec:comonadic-aspects}

As described in the introduction, the algebra
$G_\lambda A = (A^\mathbb N, \cdot^\lambda)$ associated to any
$\mathrm{k}$-algebra $A$ is in fact the cofree $\lambda$-weighted
differential algebra on $A$. To be precise about this, we consider the
category $\mathbf{Dif}_\lambda$ of $\lambda$-weighted differential
$\mathrm{k}$-algebras; as in the introduction, the objects of this
category are $\mathrm{k}$-algebras equipped with a $\mathrm{k}$-linear
endomorphism $\partial$ satisfying~\eqref{deriv}, while the morphisms
are maps of $\mathrm{k}$-algebras preserving $\partial$.

\begin{proposition}
  \label{prop:2}
  For any $\mathrm{k}$-algebra $A$, the operator
  $\partial \colon A^\mathbb N \rightarrow A^\mathbb N$ with
  $(\partial a)_n = a_{n+1}$ makes $G_\lambda A$ into a
  $\lambda$-weighted differential algebra. The algebra morphism
  $G_\lambda A \rightarrow A$ given by taking $0$th components
  exhibits $(G_\lambda A, \partial)$ as the value at $A$ of a right
  adjoint $R$ to the forgetful functor
  $U \colon \mathbf{Dif}_\lambda \rightarrow
  \mathrm{k}\text-\mathbf{Alg}$.
  The adjunction $U \dashv R$ is comonadic.
\end{proposition}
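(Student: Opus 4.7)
The proposition packages three claims, which I would address in turn: (a) that $\partial$ defines a $\lambda$-weighted derivation on $G_\lambda A$; (b) that $\epsilon_A \colon G_\lambda A \to A$, $a \mapsto a_0$, exhibits $(G_\lambda A,\partial)$ as the cofree such object on $A$; and (c) that the resulting adjunction $U \dashv R$ is comonadic. For (a), $\partial(1)=0$ is immediate, while $\partial(a\cdot^\lambda b)_n = (a\cdot^\lambda b)_{n+1}$ expands via \eqref{LambHurw} and the multinomial identity $\binom{n+1}{r,s,t} = \binom{n}{r-1,s,t}+\binom{n}{r,s-1,t}+\binom{n}{r,s,t-1}$; the three resulting pieces reassemble exactly into $(\partial a)\cdot^\lambda b$, $a\cdot^\lambda(\partial b)$, and $\lambda(\partial a)\cdot^\lambda(\partial b)$.

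For (b), given $(B,\partial_B) \in \mathbf{Dif}_\lambda$ and an algebra map $f\dd B \to A$, I would define the candidate lift by $\tilde f(b)_n = f(\partial_B^n b)$; uniqueness is forced since any $\partial$-preserving lift $g$ satisfies $g(b)_n = (\partial^n g(b))_0 = g(\partial_B^n b)_0 = f(\partial_B^n b)$, and compatibility with $\epsilon_A$ and $\partial$ is immediate from the definition. What remains is multiplicativity of $\tilde f$, for which I would first establish by induction on $n$ the iterated Leibniz formula
\begin{equation*}
\partial_B^n(bc) = \sum_{r+s+t=n}\binom{n}{r,s,t}\,\lambda^t\,(\partial_B^{r+t}b)(\partial_B^{s+t}c)\rlap{ ,}
\end{equation*}
whose inductive step uses \eqref{deriv} together with the same three-term multinomial identity as above. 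Applying $f$ to this formula and comparing with \eqref{LambHurw} then yields $\tilde f(bc) = \tilde f(b)\cdot^\lambda \tilde f(c)$.

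For (c), I would invoke the dual of Beck's theorem. The functor $U$ reflects isomorphisms because the algebra-inverse $g$ of any differential morphism $f\dd (A,\partial_A) \to (B,\partial_B)$ satisfies $g\partial_B = g\partial_B fg = gf\partial_A g = \partial_A g$. Moreover $U$ creates all limits: the limit $L$ in $\mathrm{k}\text-\mathbf{Alg}$ of the underlying diagram of a cone in $\mathbf{Dif}_\lambda$ carries a unique linear endomorphism compatible with the projections, and both conditions of \eqref{deriv} transfer to $L$ by testing against those projections. In particular $U$ creates equalizers of $U$-split pairs, so dual Beck applies. The main obstacle throughout is the iterated Leibniz formula; once it is in hand, the derivation axiom in (a) and the multiplicativity of $\tilde f$ in (b) are really the same combinatorial identity read two ways, and the comonadicity step in (c) then amounts to essentially formal bookkeeping.
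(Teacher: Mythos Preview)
Your proof is correct. The paper itself does not prove this proposition directly: its entire proof reads ``This is~\cite[Propositions~2.7 and 2.8]{GKZ2008} and~\cite[Theorem~3.5]{GKZ2014}.'' So what you have written is not a reconstruction of the paper's argument but rather a self-contained substitute for the cited literature.

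Your approach is sound in all three parts. The multinomial Pascal identity you invoke does exactly the work needed both for the Leibniz rule in~(a) and for the inductive step of the iterated Leibniz formula in~(b), and you are right that these are essentially the same computation. The cofree lift $\tilde f(b)_n = f(\partial_B^n b)$ is the standard one, and your uniqueness argument is clean. For~(c), appealing to the dual Beck criterion via ``$U$ reflects isomorphisms and creates limits'' is slightly more than strictly necessary (creating $U$-split equalizers would suffice) but is certainly correct and is the most economical way to dispatch the claim. One small omission: you should also note that $\tilde f$ preserves the unit, which follows since $\partial_B(1_B)=0$ forces $\tilde f(1_B)=(1,0,0,\dots)$; this is trivial but worth stating for completeness.
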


\begin{proof}
  This is~\cite[Propositions~2.7 and 2.8]{GKZ2008} and \cite[Theorem~3.5]{GKZ2014}.
\end{proof}

There is a corresponding (well-known) result for the algebra
$HA = (A^\mathbb N, \text{pointwise})$. Writing
$\mathrm{k}[x]\text-\mathbf{Alg}$ for the category of
$\mathrm{k}$-algebras equipped with an algebra endomorphism, we have:

\begin{proposition}
  \label{prop:3}
  For any $\mathrm{k}$-algebra $A$, the operator
  $\partial \colon A^\mathbb N \rightarrow A^\mathbb N$ with
  $(\partial a)_n = a_{n+1}$ is a ring endomorphism of $HA$. The
  algebra morphism $HA \rightarrow A$ given by taking $0$th component
  exhibits $(HA, \partial)$ as the value at $A$ of a right adjoint $S$
  to the forgetful functor
  $V \colon \mathrm{k}[x]\text-\mathbf{Alg} \rightarrow
  \mathrm{k}\text-\mathbf{Alg}$.
  The adjunction $V \dashv S$ is comonadic.
\end{proposition}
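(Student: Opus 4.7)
The plan is to verify the three claims in sequence. The first---that $\partial$ is a $\mathrm{k}$-algebra endomorphism of $HA$---is immediate, since the shift commutes componentwise with pointwise multiplication and fixes the constant sequence $1$. For the second, I would verify directly that the $0$th-component map $HA \to A$ has the universal property of the cofree $\mathrm{k}[x]$-algebra on $A$. Given $(B, \sigma) \in \mathrm{k}[x]\text-\mathbf{Alg}$ and a $\mathrm{k}$-algebra map $f \colon B \to A$, any $\mathrm{k}[x]$-algebra lift $\tilde f \colon (B, \sigma) \to (HA, \partial)$ of $f$ is forced by
\begin{equation*}
  \tilde f(b)_n = (\partial^n \tilde f(b))_0 = (\tilde f(\sigma^n b))_0 = f(\sigma^n b)
\end{equation*}
to satisfy $\tilde f(b)_n = f(\sigma^n b)$; taking this as the definition gives the required (manifestly unique) lift, and it is routine to check that it is $\mathrm{k}$-linear, multiplicative, unital, and commutes with $\sigma$ and $\partial$.

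For comonadicity I would appeal to the dual of the crude form of Beck's theorem: it suffices to show that $V$ is conservative and creates equalizers of $V$-split parallel pairs. Conservativity is immediate, since an algebra isomorphism intertwining a pair of endomorphisms has its inverse intertwining them as well. Creation of equalizers is equally direct: given parallel arrows $f, g \colon (B, \sigma) \to (B', \sigma')$ with underlying $\mathrm{k}$-algebra equalizer $e \colon E \hookrightarrow B$, the identity $f \sigma b = \sigma' f b = \sigma' g b = g \sigma b$ for $b \in E$ shows that $\sigma$ restricts to an algebra endomorphism $\bar\sigma$ of $E$, and one verifies directly that $e \colon (E, \bar\sigma) \to (B, \sigma)$ is then the equalizer of $f, g$ in $\mathrm{k}[x]\text-\mathbf{Alg}$, with $V$-image the original equalizer in $\mathrm{k}\text-\mathbf{Alg}$.

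The entire proposition is a direct simplification of Proposition~\ref{prop:2}, with the $\lambda$-weighted derivation there replaced by an honest algebra endomorphism; no step is seriously difficult, and the only real risk is bookkeeping---in particular, keeping the direction of Beck's theorem correct for \emph{co}monadicity rather than monadicity, and not conflating the $V$-creation of equalizers (which here holds for \emph{all} parallel pairs) with the weaker $V$-split hypothesis actually needed.
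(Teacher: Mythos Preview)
Your argument is correct and matches the paper's approach closely. For the adjointness, you derive exactly the formula the paper gives, $\bar f(b)_n = f(\varphi^n(b))$, and your uniqueness argument makes explicit what the paper leaves implicit. For comonadicity, the paper's one-line justification is that ``$V$ preserves colimits and is conservative''; your route via the crude Beck criterion---conservativity plus creation of equalizers---is more explicit and arguably cleaner, since equalizer creation is what dual Beck actually asks for, whereas colimit preservation is automatic for a left adjoint and does not by itself yield comonadicity. Both arguments ultimately rest on the same fact, namely that the forgetful functor from algebras-with-endomorphism creates all limits (and colimits) and reflects isomorphisms; you simply spell out the relevant half of this.
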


\begin{proof}
  If $(B,\varphi \colon B \rightarrow B) \in
  \mathrm{k}[x]\text-\mathbf{Alg}$
  and $f \colon V(B, \varphi) \rightarrow A$ is a map of
  $\mathrm{k}$-algebras, then the corresponding map
  $\bar f \colon (B, \varphi) \rightarrow (HA, \partial)$ is defined
  by $\bar f(b)_n = f(\varphi^n(b))$. This gives adjointness;
  comonadicity follows as $V$ preserves colimits and is conservative.
\end{proof}

We may now strengthen Proposition~\ref{prop:1} so as to incorporate the
induced comonad structures on $G_\lambda = UR$ and $H = VS$. 

\begin{proposition}
  \label{prop:4}
  The algebra map
  $\gamma = \gamma_\infty \colon G_\lambda A \rightarrow HA$ of
  Proposition~\ref{prop:1}(iii) is the component at $A$ of a comonad
  morphism $G_\lambda \rightarrow H$.
\end{proposition}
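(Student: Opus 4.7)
The plan is to realise $\gamma$ as the comonad morphism canonically induced by a suitable functor between the categories of coalgebras. Any functor $F \colon \mathbf{Dif}_\lambda \to \mathrm{k}[x]\text-\mathbf{Alg}$ satisfying $VF = U$ induces a comonad morphism $G_\lambda \to H$, whose component at $A$ is $V$ applied to the unique map $F(RA) \to SA$ in $\mathrm{k}[x]\text-\mathbf{Alg}$ corresponding under the adjunction $V \dashv S$ to the counit $\epsilon^{G_\lambda}_A \colon G_\lambda A \to A$.

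The natural candidate is the assignation $F(B, \partial) = (B, 1 + \lambda \partial)$. The first step is to verify that $1 + \lambda \partial$ is an algebra endomorphism of $B$ whenever $\partial$ is a $\lambda$-weighted derivation: preservation of $1$ is immediate from $\partial(1)=0$, while preservation of products is a direct rearrangement of the weighted Leibniz rule~\eqref{deriv}, namely
\[
(1+\lambda\partial)(a)\,(1+\lambda\partial)(b) = ab + \lambda(\partial a)b + \lambda a(\partial b) + \lambda^2(\partial a)(\partial b) = (1+\lambda\partial)(ab)\rlap{ .}
\]
Functoriality of $F$ and the equality $VF = U$ are then automatic.

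The second step is to identify the induced morphism with $\gamma_A$. Using $RA = (G_\lambda A, \partial)$ with $(\partial a)_n = a_{n+1}$, and noting that (by adjointness, following the proof of Proposition~\ref{prop:3}) the unit of $V \dashv S$ at $(C, \varphi) \in \mathrm{k}[x]\text-\mathbf{Alg}$ sends $c$ to $(\varphi^n c)_{n \in \mathbb N}$, the induced map $G_\lambda A \to HA$ sends $a$ to the sequence with $n$th term $((1+\lambda\partial)^n a)_0$. Since the operators $1$ and $\partial$ commute, the binomial theorem yields
\[
((1+\lambda\partial)^n a)_0 = \sum_m \binom{n}{m}\lambda^m(\partial^m a)_0 = \sum_m \binom{n}{m}\lambda^m a_m\rlap{ ,}
\]
which is precisely $\gamma_A(a)_n$ as given in Proposition~\ref{prop:1}(iii). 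The main obstacle is purely bookkeeping: one must correctly track the direction of the correspondence between comonad morphisms and their lifts to coalgebra categories, and correctly identify the unit of $V \dashv S$, so as to confirm that the natural choice $F$ produces $\gamma$ rather than a variant of it.
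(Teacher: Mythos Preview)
Your proposal is correct and follows essentially the same approach as the paper: the paper likewise constructs the functor $F\colon \mathbf{Dif}_\lambda \to \mathrm{k}[x]\text-\mathbf{Alg}$ sending $(A,\partial)$ to $(A,1+\lambda\partial)$, verifies that $1+\lambda\partial$ is a ring endomorphism by the same expansion, invokes the standard fact that a functor over the base induces a comonad morphism, and then identifies the induced map with $\gamma$ by computing $\bar\varepsilon(a)_n = \varepsilon((1+\lambda\partial)^{\circ n}a) = \sum_m \binom{n}{m}\lambda^m a_m$ via the binomial expansion. Your bookkeeping via the unit of $V\dashv S$ is exactly the paper's computation of $\bar\varepsilon$ from the description in Proposition~\ref{prop:3}.
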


\begin{proof}
  If the $\mathrm{k}$-algebra $A$ bears a $\lambda$-weighted
  differential $\partial$, then we have a ring endomorphism of $A$
  given by the operator $\varphi = 1 + \lambda\partial$. Indeed,
  $\mathrm{k}$-linearity and preservation of the unit are clear; as
  for multiplication, we have $\varphi(a)\varphi(b)$ equal to
  \begin{equation*}
    (a + \lambda(\partial a))(b + \lambda(\partial b)) = ab + \lambda((\partial a)b + a(\partial b) + \lambda (\partial a)(\partial b)) = ab + \lambda \partial(ab) = \varphi(ab)
  \end{equation*}
  as required. The assignation
  $(A, \partial) \mapsto (A, 1 + \lambda \partial)$ is thus the action
  on objects of a functor
  $F \colon \mathbf{Dif}_\lambda \rightarrow
  \mathrm{k}[x]\text-\mathbf{Alg}$
  commuting with the forgetful functors to
  $\mathrm{k}\text-\mathbf{Alg}$. Any such functor induces a comonad
  morphism
  $G_\lambda \rightarrow H$---see~\cite[Lemma~4.5.1]{Borceux1994}, for
  example---which in this case is obtained as follows. Take the cofree
  $\lambda$-differential algebra $(G_\lambda A, \partial)$, the induced
  $\mathrm{k}[x]$-algebra $(G_\lambda A, 1 + \lambda\partial)$, and the
  $0$th component homomorphism
  $\varepsilon \colon V(G_\lambda A, 1 + \lambda\partial) \rightarrow
  A$.
  The comonad morphism in question now has its $A$-component given by
  the map of $\mathrm{k}$-algebras underlying
  $\bar \varepsilon \colon (G_\lambda A, 1 + \lambda \partial)
  \rightarrow (HA, \partial)$.
  From Proposition~\ref{prop:3} above, we have that
  \begin{equation*}
    \bar \varepsilon(a)_n = \varepsilon(1 + \lambda \partial)^{\circ n}(a) = \sum_{m} \binom{n}{m} \lambda^m \varepsilon \partial^m(a) = \sum_{m} \binom{n}{m} \lambda^m a_m = \gamma(a)_n
  \end{equation*}
  since the operators $1$ and $\lambda \partial$ commute; so
  $\gamma = \bar \varepsilon$ is the component at $A$ of a comonad
  morphism, as claimed.
\end{proof}

Recall that a comonad $P$ on a category $\CC$ is said to be
\emph{cofree} on a copointed endofunctor
$(T, \varepsilon \colon T \Rightarrow \mathrm{id})$ of $\CC$ if $P$ is
the value at $(T,\varepsilon)$ of a right adjoint to the forgetful
functor $\mathbf{Cmd}(\CC) \rightarrow [\CC, \CC] / \mathrm{id}$ from
comonads to copointed endofunctors.

\begin{proposition}
  \label{prop:6}
  The comonads $G_\lambda$ and $H$ are cofree on copointed
  endofunctors, and the comonad map
  $\gamma \colon G_\lambda \rightarrow H$ is cofree on a map of
  copointed endofunctors.
\end{proposition}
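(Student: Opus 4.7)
The plan is to realise each of $G_\lambda$ and $H$ as the cofree comonad on a copointed endofunctor on $\mathrm{k}\text-\mathbf{Alg}$ arising from convolution with a two-dimensional pointed $\mathrm{k}$-coalgebra, and then to exhibit $\gamma$ as the cofree comonad map induced by a morphism of such pointed coalgebras.

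First I would introduce the pointed $\mathrm{k}$-coalgebras $C(\lambda)$, with basis $\{1,d\}$, comultiplication $\Delta(d) = 1 \otimes d + d \otimes 1 + \lambda\,d \otimes d$, counit $\epsilon(d)=0$, and point $1$; and $D$, with basis $\{1,e\}$, comultiplication $\Delta(e)=e \otimes e$, counit $\epsilon(e)=1$, and point $1$. These determine copointed endofunctors $T_\lambda A := \mathrm{Hom}_\mathrm{k}(C(\lambda), A)$ and $T_H A := \mathrm{Hom}_\mathrm{k}(D, A)$ on $\mathrm{k}\text-\mathbf{Alg}$, each equipped with its convolution $\mathrm{k}$-algebra structure and counit given by evaluation at the respective point. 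A direct unwinding of the counit condition on a coalgebra structure $\alpha$ together with the convolution formula then identifies $T_\lambda$-coalgebras with $\mathbf{Dif}_\lambda$ via $\partial(a) := \alpha(a)(d)$, and $T_H$-coalgebras with $\mathrm{k}[x]\text-\mathbf{Alg}$ via $\varphi(a) := \alpha(a)(e)$, as categories over $\mathrm{k}\text-\mathbf{Alg}$.

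Combining these identifications with the comonadicity statements of Propositions~\ref{prop:2} and~\ref{prop:3} exhibits $G_\lambda$ and $H$ as the comonads induced on $\mathrm{k}\text-\mathbf{Alg}$ by the forgetful functors from $T_\lambda\text-\mathbf{Coalg}$ and $T_H\text-\mathbf{Coalg}$. At this point I would appeal to the general fact that whenever the forgetful functor from the coalgebras of a copointed endofunctor $T$ is comonadic, the induced comonad is cofree on $T$; this follows from the universal correspondence between copointed-endofunctor maps $P \to T$ and functors $T\text-\mathbf{Coalg} \to P\text-\mathbf{Coalg}$ over the base, applied to the identity functor. For the comparison map, I would verify that $e \mapsto 1 + \lambda d$ defines a morphism of pointed $\mathrm{k}$-coalgebras $D \to C(\lambda)$---which reduces to checking $\Delta(1 + \lambda d) = (1+\lambda d) \otimes (1+\lambda d)$---and then observe that the resulting precomposition map $T_\lambda \to T_H$ of copointed endofunctors induces on coalgebras the assignation $(A,\partial) \mapsto (A, 1 + \lambda\partial)$ of Proposition~\ref{prop:4}. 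Functoriality of the cofree comonad construction will then identify the induced comonad morphism with $\gamma$.

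I expect the principal obstacle to lie in a clean formulation and application of the general categorical fact just invoked---that a comonad arising as the comonadic cover of a copointed-endofunctor coalgebra category is cofree on that endofunctor---which, although standard, requires identifying the canonical copointed-endofunctor map from the induced comonad to $T$ and verifying its universality among such maps from other comonads.
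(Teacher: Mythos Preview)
Your proposal is correct and follows essentially the same approach as the paper: both identify the relevant copointed endofunctors (your $T_\lambda, T_H$ are exactly the paper's $(S,\sigma)$ and $(T,\tau)$ with $SA=(A^2,\cdot^\lambda)$ and $TA=(A^2,\text{pointwise})$, just presented via convolution), check that their coalgebras are $\mathbf{Dif}_\lambda$ and $\mathrm{k}[x]\text-\mathbf{Alg}$ over $\mathrm{k}\text-\mathbf{Alg}$, and then invoke the general principle that the comonad arising from a comonadic forgetful functor from copointed-endofunctor coalgebras is cofree. The only difference is that you anticipate difficulty with this last step, whereas the paper simply cites~\cite[Proposition~22.2]{Kellytransfinite} for it; your identification of the comparison map via the pointed coalgebra morphism $D\to C(\lambda)$ is likewise what the paper does (there phrased directly as the map $\gamma_2$ of Proposition~\ref{prop:1}(iii)).
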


\begin{proof}
  For $G_\lambda$, consider the copointed endofunctor $(S, \sigma)$
  with $SA = (A^2, \cdot^\lambda)$ and $\sigma$ given by the first
  projection. To endow a $\mathrm{k}$-algebra $A$ with a homomorphism
  $a \colon A \rightarrow SA$ satisfying $\sigma a = 1_A$ is easily
  the same as endowing it with a $\lambda$-weighted differential;
  whence the category $(S, \sigma)\text-\mathbf{Coalg}$ of coalgebras
  for this copointed endofunctor is isomorphic over
  $\mathrm{k}\text-\mathbf{Alg}$ to the category
  $\mathbf{Dif}_\lambda$. It follows
  by~\cite[Proposition~22.2]{Kellytransfinite} that the comonad
  $G_\lambda$ induced by the adjunction
  $R \vdash U \colon \mathbf{Dif}_\lambda \rightarrow
  \mathrm{k}\text-\mathbf{Alg}$
  is the cofree comonad on $(S, \sigma)$. The same argument pertains
  for $H$ on considering the copointed endofunctor $(T, \tau)$ with
  $TA = (A^2, \text{pointwise})$ and $\tau$ given again by the first
  projection. Finally, the maps
  $\gamma_2 \colon (A^2, \cdot^\lambda) \rightarrow (A^2,
  \text{pointwise})$
  of Proposition~\ref{prop:1}(iii) are the components of a pointed
  endofunctor map $(S, \sigma) \rightarrow (T, \tau)$, composition
  with which induces the functor
  $F \colon \mathbf{Dif}_\lambda \rightarrow
  \mathrm{k}[x]\text-\mathbf{Alg}$
  of the preceding proof; whence
  $\gamma \colon G_\lambda \rightarrow H$ is induced as the cofree
  comonad morphism on $\gamma_2$.
\end{proof}

\section{Coalgebraic aspects}
\label{sec:coalgebraic-aspects}

As anticipated in the introduction, we may understand the
constructions of the preceding section more straightforwardly using
convolution. Recall that, if $(C, \varepsilon, \delta)$ is a
$\mathrm{k}$-coalgebra and $(A, \eta, \mu)$ is a $\mathrm{k}$-algebra,
the $\mathrm{k}$-linear hom $[C,A]$ becomes a $\mathrm{k}$-algebra
$[C,A]$ under \emph{convolution}, with unit
$e = \eta \varepsilon \colon C \rightarrow \mathrm{k} \rightarrow A$
and product $f \ast g = \mu(f \otimes g)\delta$.

As a first application, we consider the coalgebra $C({\lambda})$ whose
underlying $\mathrm{k}$-module is free on $\{e,d\}$ with counit and
comultiplication defined by:
\begin{equation*}
  \varepsilon(e)= 1 \ , \ \varepsilon(d)=0\ , \ \delta(e)=e\otimes e \ , \ \delta(d)=d\otimes e+e\otimes d+\lambda \ d\otimes d \ .
\end{equation*}
On the other hand, we have the coalgebra $D$ with the same underlying
$\mathrm{k}$-module but the ``set-like'' coalgebra structure given by
\begin{equation*}
  \varepsilon(e)=\varepsilon(d)=1\ , \ \delta(e)=e\otimes e \ , \ \delta(d)=d\otimes d \ .
\end{equation*}
Moreover, there is a coalgebra morphism $\xi \dd D\to C({\lambda})$
with $\xi(e)=e$ and $\xi(d)=\lambda d+e$. It is now direct from the
definitions that:

\begin{proposition}\label{Hurwitz2}
  For any $\mathrm{k}$-algebra $A$, there are isomorphisms
  $[C(\lambda), A] \cong (A^2, \cdot^\lambda)$ and
  $[D, A] \cong (A^2, \text{pointwise})$, and modulo these,
  $[\xi, A] = \gamma_2 \colon (A^2, \cdot^\lambda) \rightarrow (A^2,
  \text{pointwise})$.
\end{proposition}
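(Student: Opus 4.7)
The plan is essentially to unwind the convolution algebra structure on $[C(\lambda),A]$ and $[D,A]$ against an explicit basis and compare with the formulas for $\cdot^\lambda$, pointwise multiplication, and $\gamma_2$. Since $C(\lambda)$ and $D$ share the underlying free $\mathrm{k}$-module on $\{e,d\}$, a $\mathrm{k}$-linear map $f \colon C(\lambda) \to A$ or $f \colon D \to A$ is determined by the pair $a = (f(e),f(d)) \in A^2$, and this gives the bijections of underlying $\mathrm{k}$-modules. I would fix this identification once and then check the algebra structures separately.

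First I would handle $[C(\lambda),A]$. The convolution unit $\eta \varepsilon$ evaluates to $(\varepsilon(e), \varepsilon(d)) = (1,0)$, which is precisely the unit $(1,0,\ldots)$ of $\cdot^\lambda$ truncated to the first two components. For the product, given $f,g$ corresponding to $a,b$, I compute
\begin{align*}
  (f \ast g)(e) &= \mu(f \otimes g)(e \otimes e) = a_0 b_0\rlap{ ,}\\
  (f \ast g)(d) &= \mu(f \otimes g)(d \otimes e + e \otimes d + \lambda\, d \otimes d) = a_1 b_0 + a_0 b_1 + \lambda a_1 b_1\rlap{ ,}
\end{align*}
and then verify that this matches the $n=0$ and $n=1$ cases of formula~\eqref{LambHurw}, which are the only components of $(a \cdot^\lambda b)$ relevant in $A^2$.

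Next I would treat $[D,A]$ in the same way. Here $\eta \varepsilon$ sends both $e$ and $d$ to $1 \in A$, giving the pointwise unit $(1,1)$, and $(f \ast g)(e) = a_0 b_0$ while $(f \ast g)(d) = a_1 b_1$ by the set-like comultiplication, matching the pointwise product on $A^2$ immediately.

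Finally, for the statement about $\xi$, I would simply apply $[\xi,A]$ by precomposition: if $f \colon C(\lambda) \to A$ corresponds to $a = (a_0,a_1)$, then
\begin{equation*}
  (f \circ \xi)(e) = f(e) = a_0 \qquad \text{and} \qquad (f \circ \xi)(d) = f(\lambda d + e) = \lambda a_1 + a_0\rlap{ ,}
\end{equation*}
so $[\xi,A](a) = (a_0,\, a_0 + \lambda a_1)$. This agrees with $\gamma_2(a)_n = \sum_m \binom{n}{m}\lambda^m a_m$ evaluated at $n=0$ and $n=1$, completing the identification. There is no real obstacle here: once the basis identification is set up, every claim reduces to a one-line verification using the definitions of $\varepsilon$, $\delta$, and $\xi$, together with the $n \leqslant 1$ instances of the Hurwitz formula.
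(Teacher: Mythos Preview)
Your proposal is correct and is exactly the direct-from-definitions verification that the paper has in mind; the paper does not even write out a proof, simply stating that the result is ``direct from the definitions''. Your explicit unwinding of the convolution unit, product, and precomposition with $\xi$ against the basis $\{e,d\}$ is precisely what that phrase encodes.
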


In fact, the coalgebras $C(\lambda)$ and $D$ are pointed by the maps
$\eta \colon \mathrm{k} \rightarrow C(\lambda)$ and
$\eta \colon \mathrm{k} \rightarrow D$ with $\eta(1) = e$, and
$\xi \colon D \rightarrow C(\lambda)$ is a map of pointed coalgebras.
This structure transports under convolution to make $\gamma_2$ into
the map of \emph{copointed} endofunctors
$(S, \sigma) \rightarrow (T, \tau)$ of Proposition~\ref{prop:6}. We
saw in that Proposition that the comonads $G_\lambda$ and $H$ and
comonad morphism $\gamma \colon G_\lambda \rightarrow H$ may be
derived from these data using cofreeness; our next result reconstructs
this purely in the world of coalgebras.

We first recall the construction of the free $\mathrm{k}$-bialgebra on
a pointed $\mathrm{k}$-coalgebra $(E, \eta)$. For each $\ell > 1$,
define the coalgebra $E_{\ell}$ as the joint coequaliser
$\gamma \dd E^{\otimes \ell} \to E_{\ell}$ of the $\ell$ coalgebra
morphisms
\begin{equation*}
  \eta\otimes 1\otimes \dots \otimes 1 \ , \ 1 \otimes \eta\otimes \dots \otimes 1 \ , \ \dots \ , \ 1\otimes 1\otimes \dots \otimes \eta \dd E^{\otimes (\ell -1)}\to E^{\otimes \ell}\ .
\end{equation*}
Of course, we also can put $E_0=\mathrm{k}$ and $E_1=E$. Then for each
$\ell \geqslant 0$ we have a unique coalgebra morphism
$\zeta\dd E_{\ell}\to E_{\ell +1}$ such that the square
\begin{equation*}
  \xymatrix{
    E^{\otimes \ell} \ar[rr]^-{\gamma} \ar[d]_-{1 \otimes \dots \otimes 1 \otimes \eta} && E_{\ell} \ar[d]^-{\zeta} \\
    E^{\otimes (\ell+1)} \ar[rr]^-{\gamma} && E_{\ell+1}
  }
\end{equation*}
commutes. Now the free monoid construction in \cite{Dubuc1974} shows
that the free bialgebra $E_\infty$ on the pointed coalgebra $E$ is
obtained as the colimit of the chain
\begin{equation*}
  E_{0}\xrightarrow{\zeta=\eta} E_{1}\xrightarrow{\zeta} E_{2} \xrightarrow{\zeta} \dots \xrightarrow{\zeta} E_{\ell} \xrightarrow{\zeta} E_{\ell+1} \xrightarrow{\zeta} \dots \rlap{ .}
\end{equation*}
We now apply this to the coalgebras $D$ and $C(\lambda)$. As
$\mathrm{k}$-modules, $D^{\otimes \ell}$ and
$C(\lambda)^{\otimes \ell}$ are both free on the basis
$\{e,d\}^{\ell}$, which we think of as the set of words $W$ of length
$\ell$ in the letters $e$ and $d$; while $D_{\ell}$ and
$C(\lambda)_{\ell}$ are obtained by quotienting out by the relation
$WeW' \sim eWW'$ on basis words. They are thus vector spaces of
dimension $\ell+1$ with basis elements $e^rd^s$ where $r+s = \ell$.
Omitting to write the $e^r$ term, we can thus use the isomorphic basis
$\{d_s\mid 0\leqslant s\leqslant \ell\}$.

As a coalgebra $D_{\ell}$, like $D$, is ``set-like'' with respect to
these basis vectors: that is $\varepsilon(d_s)= 1$ and
$\delta(d_s)=d_s\otimes d_s$ for each $0 \leqslant s \leqslant \ell$.
On the other hand:

\begin{proposition}\label{coalgC}
  The coalgebra $C(\lambda)_{\ell}$, with respect to its basis
  $\{d_s\mid 0\leqslant s\leqslant \ell\}$, has
  $\varepsilon(d_{0})= 1$, $\varepsilon(d_{n})= 0$ for $0< n\le\ell$,
  and for $0\le n\le \ell$
  \begin{equation*}
    \delta(d_n)=\sum_{n=r+s+t}{\binom{n}{r,s,t}\lambda^td_{r+t}\otimes d_{s+t}} \ .
  \end{equation*}
\end{proposition}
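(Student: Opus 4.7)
The plan is to compute the counit and comultiplication on $C(\lambda)_\ell$ by lifting each basis element $d_n$ to a convenient representative in $C(\lambda)^{\otimes \ell}$, applying the tensor-product coalgebra structure there, and then projecting along $\gamma \colon C(\lambda)^{\otimes \ell} \twoheadrightarrow C(\lambda)_\ell$. This projection is legitimate because, as noted before the proposition, the joint coequaliser defining $C(\lambda)_\ell$ is taken in coalgebras, so $C(\lambda)_\ell$ carries the unique coalgebra structure making $\gamma$ a coalgebra morphism. I pick as representative of $d_n$ the simple tensor $w_n = d^{\otimes n} \otimes e^{\otimes (\ell - n)}$ in $C(\lambda)^{\otimes \ell}$; under the defining relations $WeW' \sim eWW'$ this clearly has image $d_n$.

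The counit on $C(\lambda)^{\otimes \ell}$ is the tensor product of the counits on each factor, so $\varepsilon(w_n) = \varepsilon(d)^n \varepsilon(e)^{\ell - n} = 0^n \cdot 1^{\ell - n}$, which is $1$ for $n = 0$ and $0$ for $n > 0$; this gives the asserted values $\varepsilon(d_n)$.

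For the comultiplication, I apply the tensor comultiplication on $C(\lambda)^{\otimes \ell}$ to $w_n$, expanding each of the $n$ copies of $d$ using $\delta(d) = d \otimes e + e \otimes d + \lambda\, d \otimes d$ and each of the $\ell - n$ copies of $e$ using $\delta(e) = e \otimes e$. The resulting sum is indexed by the choice, for each of the $n$ $d$-factors, of one of the three summands. Grouping these choices by triples $(r, s, t)$ with $r + s + t = n$ recording how many $d$-factors use the first, second, and third summand respectively, there are $\binom{n}{r,s,t}$ assignments and each contributes a scalar $\lambda^t$. In every resulting term of $\delta(w_n) \in C(\lambda)^{\otimes \ell} \otimes C(\lambda)^{\otimes \ell}$, the left tensor factor contains exactly $r + t$ letters equal to $d$ (the remaining $\ell - r - t$ being $e$), and the right factor contains exactly $s + t$ such letters; under $(\gamma \otimes \gamma)$ these collapse to $d_{r+t} \otimes d_{s+t}$, yielding the claimed formula.

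There is no real obstacle here: the only point requiring care is the bookkeeping of how many $d$'s land on each side of the tensor in each expanded term, but this is immediate from the three-term shape of $\delta(d)$. The key conceptual input is just that $C(\lambda)_\ell$ inherits its coalgebra structure from the tensor coalgebra $C(\lambda)^{\otimes \ell}$, which is built into the coequaliser description.
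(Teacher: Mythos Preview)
Your proof is correct and follows essentially the same approach as the paper: lift $d_n$ to a word in $C(\lambda)^{\otimes \ell}$, expand the tensor comultiplication, and push down along $\gamma \otimes \gamma$. The paper phrases the key step slightly differently---it observes that the three summands of $\delta(d)$ do not commute in $C(\lambda)^{\otimes \ell} \otimes C(\lambda)^{\otimes \ell}$ but do commute after applying $\gamma \otimes \gamma$, so that the trinomial expansion becomes valid only after projecting---whereas you handle this implicitly by noting that all $\binom{n}{r,s,t}$ terms with the same $(r,s,t)$ have the same $d$-count on each side and hence the same image under $\gamma \otimes \gamma$; these are the same observation.
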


\begin{proof}
  In the coalgebra $C(\lambda)^{\otimes \ell}$, we have for all
  $m+n = \ell$ that
  \begin{equation*}
    \delta(e^md^n) = (e\otimes e)^m(d\otimes e+e\otimes d +\lambda
    d\otimes d)^n\rlap{ .}
  \end{equation*}
  We cannot expand binomially since $d \otimes e$ and $e \otimes d$ and
  $\lambda d \otimes d$ do not commute in
  $C(\lambda)^\ell \otimes C(\lambda)^\ell$; but they do after applying
  $\gamma \otimes \gamma \colon C(\lambda)^\ell \otimes C(\lambda)^\ell
  \rightarrow C(\lambda)_\ell \otimes C(\lambda)_\ell$,
  and so we find that
  \begin{eqnarray*}
    (\gamma \otimes \gamma)\delta(e^md^n) &=&
    \sum_{n=r+s+t}{\binom{n}{r,s,t}(e\otimes e)^m(d\otimes e)^r(e\otimes d)^s\lambda^t(d\otimes d)^t} \\
    &=& \sum_{n=r+s+t}{\binom{n}{r,s,t}\lambda^t(e^{m+s}d^{r+t}\otimes e^{m+r}d^{s+t})} \ .
  \end{eqnarray*}
  This implies the result for comultiplication; the counit case is left
  as an exercise.
\end{proof}

Moreover, the pointed coalgebra morphism
$\xi \colon D \rightarrow C(\lambda)$ induces for each
$\ell \in \mathbb N$ a coalgebra morphism
$\xi_\ell \colon D_\ell \rightarrow C(\lambda)_\ell$, unique such that
the square
\begin{equation*}
  \xymatrix{
    D^{\otimes \ell} \ar[rr]^-{\gamma} \ar[d]_-{\xi^{\otimes \ell}} && D_{\ell} \ar[d]^-{\xi_\ell} \\
    C(\lambda)^{\otimes (\ell)} \ar[rr]^-{\gamma} && C(\lambda)_{\ell+1}
  }
\end{equation*}
commutes; passing to the colimit, we obtain a map of
$\mathrm{k}$-bialgebras
$\xi_\infty \colon D_\infty \rightarrow C(\lambda)_\infty$. Arguing as
in the preceding proof, we find for finite $\ell$ that
\begin{equation*}
  \xi_\ell(d_n) = \gamma\xi^{\otimes \ell}(e^k d^n) = e^k (\lambda d + e)^n = e^k \sum_m
  \binom{n}{m} \lambda^m d^m e^{n-m} = \sum_m \lambda^m \binom{n}{m} d_m
\end{equation*}
which now yields the following result generalising
Proposition~\ref{Hurwitz2}. 

\begin{proposition}
  For any $\mathrm{k}$-algebra $A$ and
  $\ell \in \mathbb N \cup \{\infty\}$, there are isomorphisms
  $[C(\lambda)_\ell, A] \cong (A^{\ell+1}, \cdot^\lambda)$ and
  $[D_\ell, A] \cong (A^{\ell+1}, \text{pointwise})$; modulo these
  $[\xi_\ell, A] = \gamma_{\ell+1}$.
\end{proposition}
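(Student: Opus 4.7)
The plan is to verify everything directly on the basis $\{d_s : 0 \le s \le \ell\}$ of $C(\lambda)_\ell$ (respectively $D_\ell$), using the comultiplication formulas already established. Since both coalgebras have this basis, a $\mathrm{k}$-linear map $f \colon C(\lambda)_\ell \to A$ is freely determined by the tuple $(f(d_0),\dots,f(d_\ell)) \in A^{\ell+1}$, and similarly for $D_\ell$. This gives the underlying $\mathrm{k}$-module isomorphisms $[C(\lambda)_\ell,A] \cong A^{\ell+1} \cong [D_\ell,A]$ at once; what remains is to match the algebra structures and to identify $[\xi_\ell,A]$ with $\gamma_{\ell+1}$.

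For the $C(\lambda)_\ell$ case, I would read the convolution product $f \ast g = \mu(f\otimes g)\delta$ off Proposition~\ref{coalgC}: evaluating at $d_n$ gives
\begin{equation*}
  (f\ast g)(d_n) = \sum_{n=r+s+t}\binom{n}{r,s,t}\lambda^t f(d_{r+t})\, g(d_{s+t})\rlap{ ,}
\end{equation*}
which is literally the $\lambda$-Hurwitz product~\eqref{LambHurw} applied to the tuples $(f(d_i))$ and $(g(d_i))$. The convolution unit $\eta\varepsilon$ sends $d_0$ to $1$ and $d_n$ to $0$ for $n>0$, matching the Hurwitz unit $(1,0,0,\dots)$. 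For $D_\ell$, the same recipe applied to the set-like comultiplication $\delta(d_s)=d_s\otimes d_s$ gives $(f\ast g)(d_s)=f(d_s)g(d_s)$ and the unit $d_s \mapsto 1$ for all $s$, which is exactly the pointwise structure on $A^{\ell+1}$.

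For the comparison, $[\xi_\ell,A](f) = f \circ \xi_\ell$, so the tuple associated to $f\circ\xi_\ell$ has $n$th component
\begin{equation*}
  f(\xi_\ell(d_n)) = f\!\left(\sum_m \binom{n}{m}\lambda^m d_m\right) = \sum_m \binom{n}{m}\lambda^m f(d_m)\rlap{ ,}
\end{equation*}
using the explicit formula for $\xi_\ell(d_n)$ computed just before the statement. This is precisely the formula for $\gamma_{\ell+1}$ from Proposition~\ref{prop:1}(iii).

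Finally, the case $\ell = \infty$ is obtained by passing to the limit: since $C(\lambda)_\infty$ and $D_\infty$ are colimits of the chains of $C(\lambda)_\ell$'s and $D_\ell$'s, the $\mathrm{k}$-algebras $[C(\lambda)_\infty,A]$ and $[D_\infty,A]$ are the corresponding limits of the $[C(\lambda)_\ell,A]$ and $[D_\ell,A]$; these match, respectively, the limit presentations $(A^{\mathbb N},\cdot^\lambda)=\lim_\ell(A^{\ell+1},\cdot^\lambda)$ and $(A^\mathbb N,\text{pointwise})=\lim_\ell(A^{\ell+1},\text{pointwise})$ noted in Section~\ref{wHp}, and the map $[\xi_\infty,A]$ is the induced map of limits, which is $\gamma_\infty$. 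There is no real obstacle beyond making sure the basis-level computation of $(f\ast g)(d_n)$ really does transcribe into~\eqref{LambHurw}; this is immediate from Proposition~\ref{coalgC} and so the proof is essentially a reading-off.
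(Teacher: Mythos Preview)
Your proof is correct and follows essentially the same approach as the paper: for finite $\ell$ you compare the comultiplication formula of Proposition~\ref{coalgC} and the explicit description of $\xi_\ell$ with the formulae in Proposition~\ref{prop:1}, and for $\ell=\infty$ you use that $[\thg,A]$ sends the colimits defining $C(\lambda)_\infty$ and $D_\infty$ to limits in $\mathrm{k}\text-\mathbf{Alg}$. The paper's proof says exactly this in two lines; you have simply written out the comparison in full.
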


\begin{proof}
  For finite $\ell$, we simply compare the preceding formulae with
  Proposition~\ref{prop:1}; for $\ell = \infty$, we observe that
  convolution
  $[\thg, A] \colon \mathrm{k}\text-\mathbf{Coalg} \rightarrow
  \mathrm{k}\text-\mathbf{Alg}^\mathrm{op}$ preserves colimits.
\end{proof}

The functor
$\mathrm{k}\text-\mathbf{Coalg} \rightarrow
[\mathrm{k}\text-\mathbf{Alg},
\mathrm{k}\text-\mathbf{Alg}]^\mathrm{op}$
sending $C$ to $[C, \thg]$ carries tensor product of coalgebras to
composition of endofunctors, and so carries each
$\mathrm{k}$-bialgebra $C$ to a comonad $[C,\thg]$ on
$\mathrm{k}\text-\mathbf{Alg}$. Of course, for the bialgebras
$C(\lambda)_\infty$ and $D_\infty$, the associated comonads are
$G_\lambda$ and $H$; this follows from Proposition~\ref{prop:6} and
the fact that the convolution functor
$\mathrm{k}\text-\mathbf{Coalg} \rightarrow
[\mathrm{k}\text-\mathbf{Alg},
\mathrm{k}\text-\mathbf{Alg}]^\mathrm{op}$
sends each free bialgebra sequence in $\mathrm{k}\text-\mathbf{Coalg}$
to a cofree comonad sequence in
$[\mathrm{k}\text-\mathbf{Alg}, \mathrm{k}\text-\mathbf{Alg}]$.

In light of these investigations, we may wonder whether there are
other kinds of $\mathrm{k}$-linear ``derivation'' which a
$\mathrm{k}$-algebra can bear, satisfying some different kind of
``Leibniz identity''. Our next result denies this possibility.

\begin{proposition}\label{pauc} 
  All pointed $\mathrm{k}$-coalgebras whose underlying module is free
  of rank $2$ are isomorphic to $C({\lambda})$ for some $\lambda$.
\end{proposition}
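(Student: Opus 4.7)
The plan is to extract from the pointed coalgebra $(E,\eta)$ a basis $\{e,d\}$ of the underlying $\mathrm{k}$-module with $e := \eta(1)$ and $\varepsilon(d) = 0$, and then to read off the structure constants of $E$ in this basis; matching them against the definition of $C(\lambda)$ will be immediate. Because $\eta$ is a coalgebra morphism, $e = \eta(1)$ is automatically group-like, so $\varepsilon(e) = 1$ and $\delta(e) = e \otimes e$. The equation $\varepsilon(e) = 1$ makes $e$ a unimodular vector in the rank-two free module $E$, and the usual trick then extends $\{e\}$ to a basis whose complement lies in $\ker\varepsilon$: if $e = af_1 + bf_2$ in some chosen basis $\{f_1, f_2\}$ with $\varepsilon(f_i) = p_i$, then $d := -p_2 f_1 + p_1 f_2$ lies in $\ker\varepsilon$ and the change-of-basis determinant $ap_1 + bp_2$ equals $\varepsilon(e) = 1$, so $\{e, d\}$ is a basis.

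With such a basis in hand, I would expand
\begin{equation*}
  \delta(d) = \alpha(e\otimes e) + \beta(e\otimes d) + \gamma(d\otimes e) + \lambda(d\otimes d)
\end{equation*}
in the induced basis of $E^{\otimes 2}$ and apply the counit in each tensor factor. Using $\varepsilon(e) = 1$ and $\varepsilon(d) = 0$, the two counit axioms force $\alpha = 0$ and $\beta = \gamma = 1$, while leaving $\lambda \in \mathrm{k}$ unconstrained. Together with $\delta(e) = e \otimes e$, these are exactly the defining relations of $C(\lambda)$, and so the $\mathrm{k}$-linear bijection sending the basis $\{e, d\}$ of $E$ to the like-named generators of $C(\lambda)$ is the desired isomorphism of pointed coalgebras.

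The only step that could in principle obstruct this plan is coassociativity, since it is a genuine equation on $\delta(d)$ that could a priori pin $\lambda$ down to specific values. The heart of the argument is therefore a direct expansion of $(\delta \otimes 1)\delta(d)$ and $(1 \otimes \delta)\delta(d)$ in $E^{\otimes 3}$; a brief inspection shows that the $\lambda$-linear and $\lambda^2$-quadratic contributions on either side match term-by-term, so coassociativity is satisfied for every $\lambda \in \mathrm{k}$ and imposes no further relation. Modulo this routine check the argument is entirely formal, which is consistent with the assertion in the excerpt that the preceding constructions essentially exhaust the pointed rank-two coalgebras.
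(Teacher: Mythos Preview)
Your proposal is correct and follows essentially the same approach as the paper: both arguments set $e=\eta(1)$ (automatically group-like), complete to a basis $\{e,d\}$ with $\varepsilon(d)=0$, read off the structure constants of $\delta(d)$ using the counit axioms to force $\alpha=0$ and $\beta=\gamma=1$, and then note that coassociativity imposes no further constraint on the remaining coefficient $\lambda$. Your explicit determinant construction of the basis $\{e,d\}$ is slightly more detailed than the paper's, which simply assumes a basis $\{e',d'\}$ with $\eta(1)=e'$ and then shifts $d'$ by $\varepsilon(d')e'$, but the substance is identical.
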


\begin{proof}
  Suppose $(C', \eta \colon \mathrm{k} \rightarrow C')$ is a pointed
  coalgebra with basis $\{e', d'\}$ such that $\eta(1)=e'$. Since
  $\eta$ is a coalgebra morphism, $\varepsilon(e')=1$ and
  $\delta(e')=e'\otimes e'$. Suppose that $\varepsilon(d')=\gamma$; by
  making the change of basis $e=e'$ and $d=d'-\gamma e$ we have that
  \begin{equation*}
    \varepsilon(e)= 1 \ , \ \varepsilon(d)=0\ , \ \delta(e)=e\otimes e \ .
  \end{equation*}
  Suppose
  $\delta(d) = \rho e\otimes e + \sigma e\otimes d + \tau d\otimes e +
  \upsilon d\otimes d$.
  From the counit properties $(\varepsilon\otimes 1)\delta = 1$ and
  $(1\otimes \varepsilon)\delta = 1$, we deduce
  $d=\rho e + \sigma d = \rho e + \tau d$ and so $\sigma = \tau = 1$ and
  $\rho = 0$. It is easily checked that the coassociativity condition is
  now automatic. So we have our result with $\upsilon = \lambda$.
\end{proof}

\section{The $\Lambda$-weighted tensor product of species}
\label{sec:lambda-weight-tens}

In~\cite{wtp} is described a ``categorification'' of the
$\lambda$-Hurwitz product on $\mathrm{k}$-algebras; in the rest of
this paper, we give corresponding ``categorifications'' of the results
of the previous sections. To give these generalisations, we replace
the commutative $\mathbb{Q}$-algebra $\mathrm{k}$ with a complete and
cocomplete symmetric monoidal closed category $\CV$, replace
$\mathrm{k}$-modules with what we shall call \emph{$\CV$-vector
  spaces}---$\CV$-categories admitting finite coproducts and
\emph{$\CV$-tensors}~\cite[\S 3.7]{KellyBook}---and replace
$\mathrm{k}$-algebras by \emph{$\CV$-algebras}, that is, $\CV$-vector
spaces with a monoidal structure which preserves finite coproducts and
$\CV$-tensors in each variable separately. We call a $\CV$-algebra
\emph{symmetric} when its monoidal structure is so\footnote{Although now 
this is extra structure not just a condition}.

Throughout the rest of this section, we fix some $\Lambda \in \CV$.
For any (symmetric) $\CV$-algebra $\CA$, there is a now (symmetric)
$\CV$-algebra structure on $\CA^\mathbb N$ with unit
$J = (I, 0, \dots)$ and binary tensor $\ast_\Lambda$ given by
\begin{equation}\label{Nten}
  (M \ast_{\Lambda} N)_n = \sum_{n = r + s + t}{\Lambda^{\otimes r} \otimes M_s\otimes N_t} \rlap{ .}
\end{equation} 
In~\cite{wtp} was described a similar tensor product on the
$\CV$-category $\CA^\mathfrak{S}$ of \emph{$\CA$-valued
  species}---here $\mathfrak S$ is the groupoid of finite sets and
bijections---whose unit $J$ and binary tensor $\ast_\Lambda$ are given
by
\begin{equation}\label{Lten}
  JX =
  \begin{cases}
    I & \text{if $X = \emptyset$;}\\
    0 & \text{otherwise;}
  \end{cases} \quad \text{and} \quad 
  (M*_{\Lambda}N)X = \sum_{\substack{U, V \subset X\\X=U\cup V}}{\Lambda^{\abs{U\cap V}} \otimes MU\otimes NV} \rlap{ .}
\end{equation}
As previously, these monoidal structures restrict back to the
respective subcategories $\CA^\ell \subset \CA^\mathbb N$ and
$\CA^{\mathfrak S_{\ell}} \subset \CA^{\mathfrak S}$ for any
$\ell \in \mathbb N$; here $\mathfrak S_{\ell}$ is the full
subcategory of $\mathfrak S$ on sets of cardinality $<\ell$. Our next
result will reconstruct these monoidal structures through an argument
like that of Section~\ref{sec:coalgebraic-aspects}.

We exploit the symmetric monoidal bicategory $\CV\text-\mathbf{Vect}$
of $\CV$-vector spaces, wherein $1$-cells are \emph{$\CV$-linear}
$\CV$-functors---ones preserving finite coproducts and
$\CV$-tensors---$2$-cells are $\CV$-natural transformations, and the
tensor product $\otimes$ classifies \emph{$\CV$-bilinear}
$\CV$-bifunctors---ones which preserve finite coproducts and
$\CV$-tensors in each variable separately. The unit object is $\CV$
itself. This monoidal bicategory is biclosed in the sense
of~\cite{mbaHa}, with the internal hom $[\CA, \CB]$ being the functor
$\CV$-category of $\CV$-linear $\CV$-functors. There is a
free-forgetful biadjunction
$\CV\text-\mathbf{Vect} \leftrightarrows \mathbf{Cat}$ whose left
adjoint is \emph{strong monoidal} with respect to the tensor product
of $\CV$-vector spaces and the cartesian product of categories. For a
given category $\CA$, we write $\spn{\CA}$ for the free $\CV$-vector
space thereon.

A (symmetric) monoidale\footnote{Also called a \emph{pseudomonoid}.} in
$\CV\text-\mathbf{Vect}$ is precisely a (symmetric) $\CV$-algebra in
the sense described above; correspondingly, we refer to comonoidales
in $\CV\text-\mathbf{Vect}$ as \emph{$\CV$-coalgebras}. By convolution
as in~\cite{mbaHa}, the internal hom from a comonoidale to a monoidale
in a biclosed monoidal bicategory is again a monoidale; so if $\CC$ is
a $\CV$-coalgebra and $\CA$ is a $\CV$-algebra, then the $\CV$-linear
hom $[\CC, \CA]_\CV$ is a $\CV$-algebra.

Consider now the $\CV$-coalgebra $\CC(\Lambda)$ with underlying
$\CV$-vector space $\spn{E, D}$---so that
$\CC(\Lambda) \otimes \CC(\Lambda) \simeq \spn{E \otimes E, E \otimes
  D, D \otimes E, D \otimes D}$---with
counit $\varepsilon \colon \CC(\Lambda) \rightarrow \CV$ and
comultiplication
$\delta \colon \CC(\Lambda) \rightarrow \CC(\Lambda) \otimes
\CC(\Lambda)$ given on generators by
\begin{equation}\label{eq:4}
  \varepsilon(E)= I \ , \ \varepsilon(D)=0\ , \ \delta(E)=E\otimes E \ , \ \delta(D)=D\otimes E+E\otimes D+\Lambda \otimes D\otimes D\rlap{.}
\end{equation}
The coassociativity and counit coherences are given on generators in
the obvious way. There is a comonoidale morphism
$\CV \rightarrow \CC(\Lambda)$ sending $V$ to $V \otimes E$ so making
$\CC(\Lambda)$ into a \emph{pointed} $\CV$-coalgebra. Convolving with
a $\CV$-algebra $\CA$ gives monoidal structures on the $\CV$-linear
hom $[\CC(\Lambda), \CA]_\CV \simeq \CA^2$; the unit object is
$J = (I, 0)$, while the binary tensor is given by
\begin{equation*}
  (M_0, M_1) \ast_{\Lambda} (N_0, N_1) = (M_0 \otimes N_0, M_0 \otimes N_1 + M_1 \otimes N_0 + \Lambda \otimes M_1 \otimes N_1)
\end{equation*}
so that we re-find the case $\ell = 2$ of the $\Lambda$-weighted
tensor product~\eqref{Nten}, which is also the case $\ell=2$
of~\eqref{Lten}. We now show how to obtain the corresponding tensor
products on $\CA^\mathbb N$ or $[\mathfrak S, \CA]$ from this by
arguing as in Section~\ref{sec:coalgebraic-aspects}.

First, since the free $\CV$-vector space $2$-functor
$\mathbf{Cat} \rightarrow \CV\text-\mathbf{Vect}$ is strong symmetric
monoidal and cocontinuous, it preserves the construction of free
(symmetric) monoidales. As the free monoidal category and the free
symmetric monoidal category on the pointed category
$\{E\} \rightarrow \{E, D\}$ are $(\mathbb N, +, 0)$ and
$(\mathfrak S, +, 0)$ respectively, we conclude that:

\begin{proposition}\label{prop:5}
  The free $\CV$-algebra and free symmetric $\CV$-algebra on the
  pointed $\CV$-vector space $\spn{E} \rightarrow \spn{E,D}$ are
  respectively $\spn{\mathbb N}$ and $\spn{\mathfrak S}$ under the
  monoidal structure given on basis elements by disjoint union.
\end{proposition}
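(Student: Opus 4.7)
The plan is to transport the question from $\CV\text-\mathbf{Vect}$ to $\mathbf{Cat}$ via the free-forgetful biadjunction and then compute directly. The pivotal fact is that the left biadjoint $\spn{\thg} \colon \mathbf{Cat} \to \CV\text-\mathbf{Vect}$ is strong symmetric monoidal and cocontinuous. A 2-functor with these two properties preserves the construction of free (symmetric) monoidales on a pointed object, because that construction---in the Dubuc/Kelly form recalled in the preceding section just before Proposition~\ref{coalgC}---is assembled from iterated tensor powers, coequalisers defining each $E_\ell$, and a final sequential colimit, all of which $\spn{\thg}$ transports step-by-step.

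Granted this preservation, it suffices to identify the free monoidal and free symmetric monoidal categories on the pointed discrete category $\iota \colon \{E\} \to \{E,D\}$. For the free monoidal category, an object is a formal iterated tensor product of the generators $E$ and $D$, with $E$ forced to be the unit $I$; applying the unit laws reduces every such object to a word in the single letter $D$, classified by a natural number counting occurrences of $D$. Since the input is discrete, no nontrivial generating morphisms appear, and monoidal coherence collapses all would-be $2$-cells to identities; the outcome is the discrete monoidal category $(\mathbb N,+,0)$. For the symmetric case the object set is the same, but the symmetries $\sigma_{D^{\otimes m},D^{\otimes n}}$ now generate the symmetric group $S_n$ as the endomorphism group of the $n$-fold tensor, with no morphisms between distinct objects; coherence for symmetric monoidal categories ensures these generators are subject to no further relations, giving the groupoid $(\mathfrak S,+,\emptyset)$ of finite sets and bijections under disjoint union.

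Finally, applying $\spn{\thg}$ and using its strong monoidality to identify its value on the pointed category $\iota$ with the pointed $\CV$-vector space $\spn{E} \to \spn{E,D}$ yields the two claimed descriptions of the free and free symmetric $\CV$-algebras.

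The one step I expect to require care is the preservation claim in the first paragraph: one must check that the specific coequaliser-and-sequential-colimit presentation of the free monoidale on a pointed object is sent by $\spn{\thg}$ to the corresponding presentation in $\CV\text-\mathbf{Vect}$. This is essentially formal, since each stage $E_\ell$ is defined by a finite diagram built from $\otimes$ and the point $\eta$, and $\spn{\thg}$ commutes with $\otimes$ up to coherent isomorphism (strong monoidality), with the coequalisers (cocontinuity), and with the terminal sequential colimit (cocontinuity again); the symmetric case is identical, with the additional quotienting by symmetries likewise preserved.
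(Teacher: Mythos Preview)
Your proposal is correct and follows essentially the same approach as the paper: the paper's entire argument is the sentence immediately preceding the proposition, which asserts that $\spn{\thg}$, being strong symmetric monoidal and cocontinuous, preserves free (symmetric) monoidales, and that the free (symmetric) monoidal categories on $\{E\}\to\{E,D\}$ are $(\mathbb N,+,0)$ and $(\mathfrak S,+,0)$. You have simply unpacked both of these claims---the preservation via the explicit Dubuc presentation, and the identification of the free monoidal categories---in more detail than the paper does.
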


The symmetric monoidal structure on the bicategory
$\CV\text-\mathbf{Vect}$ lifts to the bicategories
$\CV\text-\mathbf{Alg}$ and $\CV / \CV\text-\mathbf{Vect}$ of
$\CV$-algebras and of pointed $\CV$-vector spaces; when endowed with
these monoidal structures, the forgetful
$\CV\text-\mathbf{Alg} \rightarrow \CV / \CV\text-\mathbf{Vect}$ is
thus strict monoidal, so that its left biadjoint is an opmonoidal
homomorphism. It follows that the biadjunction passes to the
respective bicategories of comonoidales:
\begin{equation*}
  \CV\text-\mathbf{Bialg} \leftrightarrows \CV / \CV\text-\mathbf{Coalg}\rlap{ .}
\end{equation*}

Explicitly, this means that, if $Z \colon \CV \rightarrow \CC$ is a
pointed $\CV$-coalgebra, and the map
$\iota \colon \CC \rightarrow \CC_\infty$ of pointed objects exhibits
$\CC_\infty$ as the free $\CV$-algebra on
$Z \colon \CV \rightarrow \CC$ seen as a pointed $\CV$-vector space,
then $\CC_\infty$ bears a coalgebra structure making it into the free
$\CV$-bialgebra on $\CV \rightarrow \CC$, with counit and
comultiplication $\CV$-functors obtained as the essentially-unique
homomorphisms of $\CV$-algebras rendering the following diagram
commutative to within natural isomorphisms:
\begin{equation}\label{eq:1}
  \xymatrix{
    & \CC \ar[r]^-{\delta} \ar[d]^-{\iota} \ar[ld]_-{\varepsilon} & \CC\otimes \CC \ar[d]^-{\iota \otimes \iota} \\
    \CV & \CC_{\infty} \ar[r]_-{\delta} \ar[l]^-{\varepsilon} & \CC_{\infty}\otimes \CC_{\infty}\rlap{ .}
  }
\end{equation}
Of course, if we construct instead the free \emph{symmetric}
$\CV$-algebra on $\CC$, then the above argument show that we in fact
obtain the free \emph{symmetric} $\CV$-bialgebra on $\CC$. Applying
these two constructions to the pointed $\CV$-coalgebra $\CC(\Lambda)$
and using Proposition~\ref{prop:5}, we induce $\CV$-bialgebra
structures on $\spn{\mathbb N}$ and on $\spn{\mathfrak S}$; we will
show that these convolve to give the weighted tensor products
of~\eqref{Nten} and~\eqref{Lten}. The argument in the two cases is
similar, and that for $\spn{\mathbb N}$ is exactly like that in
Section~\ref{sec:coalgebraic-aspects} above; so we go through the
details only for $\spn{\mathfrak S}$.

\begin{theorem}\label{freeLambda}
  The free symmetric $\CV$-bialgebra $\CC(\Lambda)_\infty$ on the
  pointed $\CV$-coalgebra $\CC(\Lambda)$ is $\spn{\mathfrak{S}}$ with
  as algebra structure the $\CV$-linear extension of
  $(\mathfrak S, +, 0)$, and coalgebra structure determined on basis
  elements $X \in \mathfrak S$ by:
  \begin{equation}\label{eq:3}
    \varepsilon(X) =
    \begin{cases}
      I & \text{if $X = 0$;}\\
      0 & \text{otherwise,}
    \end{cases}
    \quad \text{and} \quad \delta(X) = \displaystyle\sum_{X = U \cup V} \Lambda^{\otimes \abs{U \cap V}} \otimes U \otimes V\rlap{ .}
  \end{equation}
  For any $\CV$-algebra $\CA$, the convolution algebra structure on
  $[\CC(\Lambda)_\infty, \CA]_\CV \simeq \CA^\mathfrak S$ is given by
  the $\Lambda$-weighted tensor product of~\eqref{Lten}.
\end{theorem}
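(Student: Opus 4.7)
The plan is to exploit the universal property of $\CC(\Lambda)_\infty$ as the free symmetric $\CV$-bialgebra on the pointed $\CV$-coalgebra $\CC(\Lambda)$, together with the explicit description of the underlying free symmetric $\CV$-algebra furnished by Proposition~\ref{prop:5}. First I would note that that proposition, applied to the underlying pointed $\CV$-vector space $\spn{E} \rightarrow \spn{E,D}$, identifies $\CC(\Lambda)_\infty$ as $\spn{\mathfrak{S}}$ with the $\CV$-linear extension of disjoint union as its symmetric monoidal structure, under the correspondence sending $E$ to the empty set $0 \in \mathfrak S$ and $D$ to a one-element set $1 \in \mathfrak S$. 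Diagram~\eqref{eq:1} then pins down the coalgebra structure: the counit and comultiplication of $\CC(\Lambda)_\infty$ are the essentially unique strong symmetric monoidal $\CV$-functors whose values on $E$ and $D$ are those prescribed by~\eqref{eq:4}.

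Every $X \in \mathfrak S$ is, up to isomorphism, an $n$-fold disjoint union of copies of $1$, with $n = \abs{X}$; strong monoidality therefore reduces the computation of $\varepsilon(X)$ and $\delta(X)$ to iterated tensor products of the values on $D$. The counit is immediate: $\varepsilon(X) = \varepsilon(1)^{\otimes n}$, which equals $I$ for $n=0$ and $0$ otherwise, matching~\eqref{eq:3}. For the comultiplication I would write
\begin{equation*}
  \delta(X) = \delta(1)^{\otimes n} = \bigl(1\otimes 0 \;+\; 0\otimes 1 \;+\; \Lambda\otimes(1\otimes 1)\bigr)^{\otimes n}\rlap{ ,}
\end{equation*}
where the $n$-fold tensor power is taken with respect to the componentwise multiplication on $\spn{\mathfrak S} \otimes \spn{\mathfrak S}$ inherited from that of $\spn{\mathfrak S}$.

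Next I would expand this product using distributivity of $\otimes$ over finite coproducts and over $\CV$-tensors in $\CV\text-\mathbf{Vect}$. Each of the $n$ factors contributes one of three summands, producing a coproduct indexed by functions $f\colon X \to \{L, R, B\}$, whose $f$-summand is $\Lambda^{\otimes\abs{f^{-1}(B)}} \otimes U_f \otimes V_f$ with $U_f = f^{-1}(L) \cup f^{-1}(B)$ and $V_f = f^{-1}(R) \cup f^{-1}(B)$. Such $f$ are in bijection with decompositions $X = U \cup V$ via $(U,V) \mapsto$ the function classifying the elements of $X$ into $U\setminus V$, $V\setminus U$, and $U \cap V$; since $\abs{f^{-1}(B)} = \abs{U\cap V}$, the sum reindexes to exactly~\eqref{eq:3}. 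The resulting formula is manifestly natural under bijections of $X$, confirming that it descends to a $\CV$-natural transformation on the groupoid $\mathfrak S$.

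Finally, for the convolution statement, the equivalence $[\CC(\Lambda)_\infty, \CA]_\CV \simeq \CA^\mathfrak S$ sends a $\CV$-linear functor $M$ to the family $(MX)_{X\in\mathfrak S}$, and convolution is by definition $M \ast N = \mu_\CA \circ (M \otimes N) \circ \delta$. Substituting the formula~\eqref{eq:3} for $\delta(X)$ and using $\CV$-linearity of $\mu_\CA$ in each variable recovers exactly the $\Lambda$-weighted tensor product of~\eqref{Lten}. The principal obstacle in the argument is the combinatorial expansion of $\delta(1)^{\otimes n}$: particular care is needed to collect the $\CV$-tensor factors of $\Lambda$ arising from the several copies of $\Lambda\otimes(1\otimes 1)$ into a single $\Lambda^{\otimes\abs{U\cap V}}$, and to verify that the reindexing by decompositions $(U,V)$ is compatible with the symmetries that come from working in the groupoid $\mathfrak S$.
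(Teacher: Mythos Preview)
Your argument is correct and reaches the same conclusion as the paper, but the logical flow is inverted. The paper first invokes \cite[Proposition~23]{wtp} to certify that the formulas~\eqref{eq:3} already define a $\CV$-bialgebra structure on $\spn{\mathfrak S}$, and then checks that this structure restricts along $\iota$ to the coalgebra $\CC(\Lambda)$; uniqueness in~\eqref{eq:1} then forces it to be the free one. You instead start from the abstract existence of $\CC(\Lambda)_\infty$ and its characterisation via~\eqref{eq:1}, and \emph{compute} the counit and comultiplication by extending strong-monoidally from the generating object $1$, expanding $\delta(1)^{\otimes n}$ trinomially and reindexing by covers $X=U\cup V$. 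Your route is more self-contained---it does not need the external citation to establish coassociativity, since that is guaranteed by the general free-bialgebra construction---and it mirrors at the categorified level the computation the paper carries out in Proposition~\ref{coalgC} for the uncategorified $C(\lambda)_\ell$. The paper's route is shorter on the page because the combinatorics has been outsourced. Either is fine; your explicit expansion is a perfectly acceptable substitute for the citation.
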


\begin{proof}
  \cite[Proposition~23]{wtp} shows that the above data do indeed
  define a $\CV$-bialgebra structure on $\spn{\mathfrak S}$. Let
  $\iota \colon \spn{E, D} \rightarrow \spn{\mathfrak S}$ be given on
  generators by $E \mapsto 0$ and $D \mapsto 1$. By
  Proposition~\ref{prop:5}, this map exhibits $\spn{\mathfrak S}$ as
  the free symmetric $\CV$-algebra on the pointed object
  $\spn{E} \rightarrow \spn{E, D}$; now by comparing~\eqref{eq:3}
  with~\eqref{eq:4}, we see that the following diagram commutes to
  within isomorphism since it does so on generators:
  \begin{equation*}
    \xymatrix{
      & \spn{E, D} \ar[r]^-{\delta} \ar[d]^-{\iota} \ar[ld]_-{\varepsilon} 
      & \spn{E,D} \otimes \spn{E,D} \ar[d]^-{\iota \otimes \iota} \\
      \CV & \CV\mathfrak{S} \ar[r]_-{\delta} \ar[l]^-{\varepsilon} & \CV\mathfrak{S}\otimes \CV\mathfrak{S}}\rlap{ .}
  \end{equation*}
  By the argument following Proposition~\ref{prop:5}, we conclude that
  $\spn{\mathfrak S}$, equipped with the given algebra and coalgebra
  structures, is the free symmetric $\CV$-bialgebra
  $\CC(\Lambda)_\infty$ on $\CC(\Lambda)$. The final claim follows
  immediately by comparing~\eqref{eq:3}
  with~\eqref{Lten}.
\end{proof}

We now relate the $\Lambda$-weighted tensor products on
$\CA^\mathbb N$ and $\CA^\mathfrak S$ to the pointwise ones, following
the pattern set out in Section~\ref{sec:coalgebraic-aspects} above. To
this end, consider the pointed $\CV$-coalgebra $\CD$ with the same
underlying $\CV$-vector space $\spn{E,D}$ as $\CC(\Lambda)$ and the
same pointing, but the diagonal comonoidale structure:
\begin{equation*}
  \varepsilon(E)= I \ , \ \varepsilon(D)=I\ , \ \delta(E)=E\otimes E \ , \ \delta(D)=D\otimes D\rlap{ .}
\end{equation*}
For any $\CV$-algebra $\CA$, convolution with $\CD$ induces the
pointwise $\CV$-algebra structure on the $\CV$-linear hom
$[\CD, \CA] \simeq \CA^2$. Arguing as previously, we now have that:

\begin{theorem}\label{thm:1}
  The free symmetric $\CV$-bialgebra $\CD_\infty$ on the pointed
  $\CV$-coalgebra $\CD$ is $\spn{\mathfrak S}$ with as algebra
  structure the $\CV$-linear extension of
  $(\mathfrak S, +, \emptyset)$, and coalgebra structure given on
  homogeneous elements $X \in \mathfrak S$ by:
  \begin{equation}\label{eq:3}
    \varepsilon(X) = I
    \quad \text{and} \quad \delta(X) = X \otimes X\rlap{ .}
  \end{equation}
  For any $\CV$-algebra $\CA$, the convolution algebra structure on
  $[\CD_\infty, \CA]_\CV \simeq \CA^\mathfrak S$ is the pointwise
  algebra structure.
\end{theorem}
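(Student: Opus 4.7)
The plan is to mirror the proof of Theorem \ref{freeLambda}, exploiting the biadjunction $\CV\text-\mathbf{Bialg} \leftrightarrows \CV / \CV\text-\mathbf{Coalg}$ displayed just before that theorem, which transports the free symmetric $\CV$-algebra construction to the free symmetric $\CV$-bialgebra construction. First I would observe, via Proposition \ref{prop:5}, that $\spn{\mathfrak S}$ with the $\CV$-linear extension of $(\mathfrak S, +, \emptyset)$ is the free symmetric $\CV$-algebra on the pointed $\CV$-vector space $\spn{E} \to \spn{E, D}$, with unit $\iota \colon \spn{E,D} \to \spn{\mathfrak S}$ given on generators by $E \mapsto \emptyset$ and $D \mapsto 1$.

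Next I would verify that the prescribed counit and comultiplication make $\spn{\mathfrak S}$ into a $\CV$-bialgebra. The coalgebra axioms hold on basis elements in the obvious ``set-like'' manner and extend by $\CV$-linearity; compatibility with the algebra structure reduces to the identities $\varepsilon(X + Y) \cong I \cong \varepsilon(X) \otimes \varepsilon(Y)$ and $\delta(X + Y) = (X + Y) \otimes (X + Y) \cong (X \otimes X) \cdot (Y \otimes Y) = \delta(X) \cdot \delta(Y)$, the latter using the componentwise algebra structure on $\spn{\mathfrak S} \otimes \spn{\mathfrak S}$ and the fact that $\spn{\mathfrak S}$ is a symmetric $\CV$-algebra.

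I would then check that the analogue of diagram \eqref{eq:1}, with $\CC = \CD$ and $\CC_\infty = \spn{\mathfrak S}$, commutes up to canonical isomorphism on the generators $E$ and $D$: on $E$ both composites produce $\emptyset$, $\emptyset \otimes \emptyset$, and $I$ as appropriate; on $D$ they produce $1$, $1 \otimes 1$, and $I$, matching the defining formulas for the counit and comultiplication of $\CD$. Since the two legs of each face of the diagram are homomorphisms of symmetric $\CV$-algebras, the universal property of $\iota$ promotes agreement on generators to agreement of algebra morphisms out of $\spn{\mathfrak S}$; by the biadjunction recalled before Theorem \ref{freeLambda}, this identifies $\spn{\mathfrak S}$ equipped with these algebra and coalgebra structures as the free symmetric $\CV$-bialgebra $\CD_\infty$ on $\CD$. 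The final claim is then immediate: convolving $M, N \colon \spn{\mathfrak S} \to \CA$ and evaluating at $X \in \mathfrak S$ gives $\mu_\CA(M \otimes N)\delta(X) = MX \otimes NX$, which is exactly the pointwise tensor on $\CA^\mathfrak S$.

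There is no genuine obstacle: the ``set-like'' case is strictly simpler than the $\Lambda$-weighted one already handled in Theorem \ref{freeLambda}, since no terms of the form $\Lambda^{\otimes t}$ need to appear after applying $\iota \otimes \iota$, and no non-trivial quotients are involved. The only subtle point is that the universal property of $\iota$ determines $\CV$-algebra morphisms out of $\spn{\mathfrak S}$ only up to canonical isomorphism, but this is precisely the level of strictness at which the biadjunction between $\CV\text-\mathbf{Bialg}$ and $\CV / \CV\text-\mathbf{Coalg}$ operates, so the comparison goes through without difficulty.
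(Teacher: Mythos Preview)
Your proposal is correct and takes essentially the same approach as the paper: the paper does not give a separate proof for this theorem, instead introducing it with ``Arguing as previously, we now have that'', meaning the proof of Theorem~\ref{freeLambda} is to be repeated \emph{mutatis mutandis}. Your write-up simply makes that repetition explicit, and the only minor difference is that you verify the set-like bialgebra axioms directly rather than citing an external source as the paper does for the $\Lambda$-weighted case.
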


To compare the pointwise and the $\Lambda$-weighted monoidal
structures on $\CA^\mathfrak S$, it will thus suffice to compare their
restrictions to $\CA^2$. So consider the $\CV$-linear $\CV$-functor
$\Theta \colon \CD \rightarrow \CC(\Lambda)$ defined on generators by
$\Theta(E) = E$ and $\Theta(D) = \Lambda \otimes D + E$; we may without
difficulty equip this with the structure of a strong morphism of
pointed comonoidales in $\CV\text-\mathbf{Vect}$. The pointings on
$\CC(\Lambda)$ and $\CD$ correspond to the $\CV$-algebra morphisms
given by the first projection, while
$\Theta \colon \CD \rightarrow \CC(\Lambda)$ transports under convolution
to yield $\hat \Theta \colon \CA^2 \rightarrow \CA^2$ sending
$(M_0, M_1)$ to $(M_0, M_0 + \Lambda \otimes M_1)$. Since $\Theta$ is a
strong morphism of pointed comonoidales, this yields:

\begin{proposition}\label{smfover}
  $\widehat{\Theta} \dd (\CA^2, *_{\Lambda}, J) \to (\CA^2,
  \text{pointwise})$ is strong monoidal.
\end{proposition}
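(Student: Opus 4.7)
The plan is to appeal to the contravariant monoidal $2$-functoriality of convolution: for any $\CV$-algebra $\CA$, the internal hom $[-, \CA]_\CV$ sends $\CV$-coalgebras to $\CV$-algebras, and strong morphisms of (pointed) comonoidales to strong monoidal $\CV$-functors. Granting the preceding claim that $\Theta \dd \CD \to \CC(\Lambda)$ carries the structure of a strong morphism of pointed comonoidales in $\CV\text-\mathbf{Vect}$, this immediately yields strong monoidality of $\widehat{\Theta} = [\Theta, \CA]_\CV \dd (\CA^2, \ast_\Lambda, J) \to (\CA^2, \text{pointwise})$.

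The substantive step is therefore to verify that $\Theta$ really is a strong comonoidale morphism, which I would do by checking on the generators $E$ and $D$. For the counit one computes $\varepsilon_{\CC(\Lambda)}\Theta(E) = I = \varepsilon_\CD(E)$ and $\varepsilon_{\CC(\Lambda)}\Theta(D) = \varepsilon_{\CC(\Lambda)}(\Lambda \otimes D + E) = 0 + I = \varepsilon_\CD(D)$. For the comultiplication one needs an invertible $2$-cell $\delta_{\CC(\Lambda)}\Theta \cong (\Theta \otimes \Theta)\delta_\CD$. On $E$ this is immediate since both composites equal $E \otimes E$; on $D$ it is the identification
\begin{equation*}
\Lambda \otimes (D \otimes E + E \otimes D + \Lambda \otimes D \otimes D) + E \otimes E \;\cong\; (\Lambda \otimes D + E) \otimes (\Lambda \otimes D + E)
\end{equation*}
obtained by distributing $\otimes$ over finite coproducts in $\CV\text-\mathbf{Vect}$. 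Compatibility with the pointings is trivial since $\Theta(E) = E$.

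Passing through convolution, this strong comonoidale structure on $\Theta$ furnishes precisely the strong monoidal structure on $\widehat{\Theta}$, whose action on objects unpacks to $\widehat{\Theta}(M_0, M_1) = (M_0, M_0 + \Lambda \otimes M_1)$ as recorded in the paragraph preceding the proposition. The only obstacle of note is the bookkeeping in the comultiplication check on $D$, but this is a mechanical distributivity calculation; no deeper difficulty arises, since the general convolution machinery of~\cite{mbaHa} does all the remaining work.
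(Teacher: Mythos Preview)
Your proposal is correct and follows precisely the paper's approach: the paper derives the result from the fact that $\Theta \colon \CD \to \CC(\Lambda)$ is a strong morphism of pointed comonoidales, together with the general principle that convolution $[\thg, \CA]_\CV$ carries such morphisms to strong monoidal functors. The paper asserts the comonoidale-morphism structure on $\Theta$ ``without difficulty'' and leaves the verification implicit, whereas you have spelled out the generator-by-generator check; otherwise the arguments are identical.
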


The map $\Theta$ induces a map
$\Theta_\infty \colon \CD_\infty \rightarrow \CC(\Lambda)_\infty$ of
symmetric bimonoidales which is determined in an essentially-unique
manner by the requirement that the square
\begin{equation}\label{Thetabar}
  \begin{aligned}
    \xymatrix{
      \CD \ar[rr]^-{\Theta} \ar[d]_-{\iota} && \CC(\Lambda) \ar[d]^-{\iota} \\
      \CD_\infty \ar[rr]^{\Theta_\infty} && \CC(\Lambda)_\infty}
  \end{aligned}
\end{equation}
should commute to within isomorphism. It is easy to see that these
requirements are satisfied by taking $\Theta_\infty$ to be defined on
basis elements $X \in \mathfrak S$ by:
\begin{equation*}
  \Theta_\infty(X) = \sum_{W \subseteq X} \Lambda^{\otimes \abs W} \otimes W\rlap{ .}
\end{equation*}
The proof that this is a map of bimonoidales uses the equation
$U+V=U\cup V+U\cap V$ for $U, V\subseteq X$. For any $\CV$-algebra
$\CA$, convolution with $\Theta_\infty$ yields a $\CV$-functor
$\widehat{\Theta}_\infty \colon \CA^\mathfrak S \rightarrow \CA^\mathfrak
S$
defined by
$\bar{\Theta}_\infty(M)X = \sum_{W \subseteq X} \Lambda^{\otimes \abs W}
\otimes MW$,
and since $\Theta_\infty$ is a strong map of bimonoidales, we conclude
that:
\begin{proposition}\label{smfover}
  $\widehat{\Theta}_\infty \dd (\CA^\mathfrak S, *_{\Lambda}, J) \to
  (\CA^\mathfrak S, \text{pointwise})$ is strong
  monoidal.
\end{proposition}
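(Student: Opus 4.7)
The plan is to deduce strong monoidality of $\widehat{\Theta}_\infty$ in exactly the same way as for the preceding Proposition, by arguing that it arises via convolution from a strong morphism of symmetric $\CV$-bimonoidales. The convolution biendofunctor $[-,\CA]_\CV$ sends strong morphisms of comonoidales in $\CV\text-\mathbf{Vect}$ to strong monoidal $\CV$-functors between the corresponding convolution $\CV$-algebras, so the proposition follows once I show that $\Theta_\infty \colon \CD_\infty \to \CC(\Lambda)_\infty$ is strong as a morphism of comonoidales---that its structural coherence maps with counit and comultiplication are invertible. Compatibility with the monoidale (algebra) structures needs no separate verification, since it is encoded into $\Theta_\infty$ by the universal property of the free construction via~\eqref{Thetabar}.

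I would verify the two comonoidale coherences directly on basis elements $X \in \mathfrak{S}$, using the explicit formula $\Theta_\infty(X) = \sum_{W \subseteq X} \Lambda^{\otimes \abs W} \otimes W$ together with the coalgebra structures of Theorems~\ref{freeLambda} and~\ref{thm:1}. For the counit:
\[
\varepsilon_{\CC(\Lambda)_\infty}\Theta_\infty(X) = \sum_{W \subseteq X} \Lambda^{\otimes \abs W} \otimes \varepsilon_{\CC(\Lambda)_\infty}(W)
\]
and only the term $W = \emptyset$ contributes a nonzero summand $I$, which matches $\varepsilon_{\CD_\infty}(X) = I$. For the comultiplication, I would compute on the one hand
\[
(\Theta_\infty \otimes \Theta_\infty)\delta_{\CD_\infty}(X) = \Theta_\infty(X) \otimes \Theta_\infty(X) = \sum_{U, V \subseteq X} \Lambda^{\otimes \abs U+\abs V} \otimes U \otimes V
\]
using the diagonal comultiplication on $\CD_\infty$, and on the other hand
\[
\delta_{\CC(\Lambda)_\infty}\Theta_\infty(X) = \sum_{W \subseteq X}\sum_{W = U \cup V} \Lambda^{\otimes \abs W + \abs{U \cap V}} \otimes U \otimes V\rlap{ ,}
\]
then observe that reindexing the latter sum over pairs $(U,V)$ with $U, V \subseteq X$ (recovering $W$ as $U \cup V$) converts it into the former by virtue of the identity $\abs{U \cup V} + \abs{U \cap V} = \abs U + \abs V$---the very identity already invoked when $\Theta_\infty$ was recognised as a map of bimonoidales.

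With these two isomorphisms in hand, $\Theta_\infty$ is a strong morphism of symmetric $\CV$-bimonoidales, and convolving with $\CA$ yields the strong monoidal $\CV$-functor $\widehat{\Theta}_\infty$ modulo the identifications $[\CD_\infty, \CA]_\CV \simeq \CA^\mathfrak{S} \simeq [\CC(\Lambda)_\infty, \CA]_\CV$ of Theorems~\ref{freeLambda} and~\ref{thm:1}. The only place that requires any genuine work is the combinatorial reindexing in the comultiplication check; everything else---notably the passage from a strong comonoidale morphism to a strong monoidal convolution functor---is pure bookkeeping in the biclosed symmetric monoidal bicategory $\CV\text-\mathbf{Vect}$, entirely parallel to the case of $\widehat{\Theta}$ already handled in the preceding Proposition.
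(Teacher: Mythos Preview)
Your proposal is correct and follows essentially the same approach as the paper: the paper states just before the proposition that ``the proof that this is a map of bimonoidales uses the equation $U+V=U\cup V+U\cap V$ for $U, V\subseteq X$'' and then concludes by convolution, which is precisely the argument you have written out in detail. Your explicit verification of the comultiplication coherence via the reindexing and the identity $\abs{U\cup V}+\abs{U\cap V}=\abs U+\abs V$ simply unpacks what the paper leaves as a one-line remark.
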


Note that the preceding two results are further examples of a
transformation converting a convolution product into pointwise product
as promoted in \cite{55}.

\section{Correspondences of Dold--Kan type}
\label{sec:corr-dold-kan}
In these final two sections, we consider a different categorification
of Sections~\ref{wHp}--~\ref{sec:coalgebraic-aspects}. This time, we
replace modules over a commutative ring $\mathrm{k}$ by \emph{additive
  Karoubian} categories---$\mathbf{Ab}$-enriched categories which
admit finite biproducts and splittings of idempotents; and we replace
(commutative) $\mathrm{k}$-algebras by (symmetric)
\emph{$\mathbf{Ab}$-algebras}: (symmetric) monoidal additive Karoubian
categories.

In~\cite{Lack2015Combinatorial} is described a general theory for
establishing equivalences of additive Karoubian categories. From an
ordinary category $\CP$ equipped with a subcategory of monomorphisms
$\CM$ satisfying some axiomatic assumptions is constructed a category
$\CD$ enriched over the category of pointed sets together with an
equivalence
\begin{equation}\label{eq:2}
  \Gamma \colon [\CD, \CX]_{\mathrm{pt}} \rightarrow [\CP, \CX]  
\end{equation}
for each additive Karoubian $\CX$; here, on the right we have the
ordinary functor category, and on the left the category of zero-map
preserving functors. For suitable choices of $\CP$ and $\CM$,
equivalences obtained in this way include the classical
Dold--Puppe--Kan correspondence~\cite{Dold1961Homologie} between
simplicial abelian groups and chain complexes; the correspondence
between cubical and semi-simplicial abelian groups; and the
equivalence between linear species and the
\emph{$\mathrm{FI}\sharp$-modules} of~\cite{Church2015FI-modules}.

In the situation of~\eqref{eq:2}, if the additive Karoubian $\CX$ is
an $\mathbf{Ab}$-algebra, then so too is $[\CP, \CX]$ under the
pointwise tensor product. Transporting across the equivalence yields
an $\mathbf{Ab}$-algebra structure also on~$[\CD, \CX]_\mathrm{pt}$;
in fact, this turns out to be a Hurwitz-style ($\lambda = 1$) tensor
product, and the comparison functor~\eqref{eq:2} yet another example
of a transform taking convolution to pointwise product.

We show this in detail for a special but useful case of~\eqref{eq:2}.
Let $\CC$ be a category equipped with an orthogonal factorisation
system $(\CE, \CM)$ in the sense of~\cite{FreydKelly} such that all
$\CM$-maps are monomorphisms, pullbacks of $\CM$-maps along arbitrary
morphisms exist, and each $A \in \CC$ has only a finite set of
distinct $\CM$-subobjects. We take $\CP = \mathbf{Par}(\CC, \CM)$,
whose objects are those of $\CC$, whose maps from $A$ to $B$ are
isomorphism-classes of spans
$m \colon A \leftarrowtail R \rightarrow B \colon f$ in $\CC$ with
$m \in \CM$, and whose composition is by pullback. We write $\CR$ for
the category whose objects are those of $\CC$ and whose maps are the
$\CE$-maps, and we define $\CD$ to be the free category with zero maps
on $\CR$. With this choice of $\CP$ and $\CD$ we obtain
by~\cite[Example~3.1]{Lack2015Combinatorial} our first instance of an
equivalence~\eqref{eq:2}, which, since $\CD$ is free on $\CR$, may be
written more simply as
\begin{equation}\label{eq:6}
  \Gamma \colon [\CR, \CX] \rightarrow [\CP, \CX]\rlap{ .}
\end{equation}

In order to see how the pointwise tensor product on $[\CP, \CX]$
transports under this equivalence, we will need an explicit
description of both $\Gamma$ and its pseudoinverse. Choose for each
$A \in \CC$ a representing set $\mathrm{Sub}(A)$ of $\CM$-subobjects,
and write $B \leqslant_n A$ to mean that
$n \colon B \rightarrowtail A$ is in $\mathrm{Sub}(A)$ and $B <_n A$
to mean that $n \in \mathrm{Sub}(A)$ is \emph{proper}: that is,
non-invertible. For $F \in [\CR, \CX]$, we now take
$\Gamma F \colon \CP \rightarrow \CX$ to be given on objects by the
(finite) direct sum
\begin{equation*}
  (\Gamma F)A = \bigoplus_{B \leqslant_n A} FB\rlap{ .}
\end{equation*}
To specify $\Gamma F$ at a map
$m \colon A \leftarrowtail R \rightarrow A' \colon f$ in $\CP$,
suppose that $B \leqslant_n A$ and that $B' \leqslant_{n'} A'$, and
define $\xi(m,f)_{nn'} \colon FB \rightarrow FB'$ to be $Fe$ if there is
a (necessarily unique) diagram of the form
\begin{equation}\label{eq:5}
  \vcenter{\hbox{\xymatrix{
        B \ar@{ >->}[d]_-{n} \ar@{=}[r] & B \ar@{ >->}[d]^-{p} \ar@{->}[r]^-{e \in \CE} & B' \ar@{ >->}[d]^-{n'}\\
        A & R \ar@{ >->}[l]_-{m} \ar[r]^-{f} & A' 
      }}}
\end{equation}
and define $\xi(m,f)_{nn'} = 0$ otherwise. We now take
$(\Gamma F)(m,f) \colon (\Gamma F)A \rightarrow (\Gamma F)A'$ to be
the matrix of size $\mathrm{Sub}(A) \times \mathrm{Sub}(A')$ with
entries $\xi(m,f)_{nn'}$. Note in particular that, if $f = m$
in~\eqref{eq:5}, then $e$ must be invertible, whence $n = n'$ in
$\mathrm{Sub}(A)$; thus $\xi(m,m)_{nn'}$ is the identity if $n = n'$
and $n$ factors through $m$, and is zero otherwise. This shows that
$(\Gamma F)(m,m)$ is the idempotent on
$\bigoplus_{B \leqslant_n A} FB$ which projects onto those summands
$n \in \mathrm{Sub}(A)$ which factor through $m$.

The inverse equivalence $N \colon [\CP, \CX] \rightarrow [\CR, \CX]$
to~\eqref{eq:2} sends $H \in [\CP, \CX]$ to
$N H \colon \CR \rightarrow \CX$ defined on objects by
\begin{equation}\label{eq:7}
  (N H)(A) = \bigcap_{R <_m A} \text{ker}(H(m,m) \colon HA \rightarrow HA)\rlap{ ;}
\end{equation}
note that this is well-defined in the Cauchy-complete $\CX$, since the
limit involved may be constructed by splitting the idempotent
$\prod_{R <_m A}(1 - H(m,m))$ of the ring $\CX(HA,HA)$. The action of
$N H$ on a morphism $e \colon A \rightarrow A'$ is the unique
factorisation of the composite
$(N H)(A) \rightarrowtail HA \rightarrow HA'$ through
$(N H)(A') \rightarrowtail HA'$; the existence of such a factorisation
is verified in~\cite[Theorem~4.1]{Lack2015Combinatorial}, while the
fact that $\Gamma$ and $N$ are indeed pseudoinverse is proved by
Theorem~6.7 of \emph{ibid}.

Using the above formulae, we may now derive the existence of a
Hurwitz-style tensor product on $[\CR, \CX]$ for any
$\mathbf{Ab}$-algebra $\CX$ which transports the pointwise one on
$[\CP, \CX]$. In the statement of the following result, we call a pair
of subobjects $n, n' \in \mathrm{Sub}(A)$ \emph{covering} if any
$\CM$-map through which both $n$ and $n'$ factor is invertible.

\begin{proposition}
  \label{prop:7}
  Let $\CC$ be a category equipped with an $(\CE, \CM)$-factorisation
  system, let all pullbacks along $\CM$-maps exist and let each
  $A \in \CC$ have but a finite set of $\CM$-subobjects. Writing as
  above $\CP = \mathbf{Par}(\CC, \CM)$ and $\CR$ for the category of
  $\CE$-maps, there is for any (symmetric) $\mathbf{Ab}$-algebra $\CX$
  a (symmetric) $\mathbf{Ab}$-algebra structure $(\ast, J)$ on
  $[\CR, \CX]$ whose unit $J$ and binary tensor $\ast$ are given by
  \begin{equation}
    J(A) =
    \begin{cases}
      I & \text{if $\abs{\mathrm{Sub}(A)} = 1$;}\\
      0 & \text{otherwise} 
    \end{cases} \quad 
    \text{and} \quad (F \ast G)(A) =\ \vcenter{\hbox{$\displaystyle \bigoplus_{\substack{B \leqslant_n A,\ B' \leqslant_{n'} A \\ n, n' \text{ are covering}}} FB \otimes GB'$}}
    \label{eq:8}
  \end{equation}
  Moreover, \eqref{eq:6} is strong monoidal as a functor
  $([\CR, \CX], \ast) \rightarrow ([\CP, \CX], \text{pointwise})$.
\end{proposition}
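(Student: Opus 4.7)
The plan is to transport the pointwise monoidal structure on $[\CP,\CX]$ across the equivalence $\Gamma \colon [\CR,\CX] \simeq [\CP,\CX]$. Concretely, fix the pseudoinverse $N$ of~\eqref{eq:7} and set $J := N(J_{\CP})$ and $F\ast G := N(\Gamma F \otimes_{\mathrm{pt}} \Gamma G)$, where $J_{\CP}$ is the constant functor at $I$. Transport of a (symmetric) monoidal structure along an equivalence automatically yields a (symmetric) monoidal structure for which the equivalence is strong monoidal; so the content of the proposition reduces to matching $J$ and $F \ast G$ on objects with the formulas in~\eqref{eq:8}.

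For the unit: at $A \in \CC$ we have $J_{\CP}(A) = I$ and $J_{\CP}(m,m)=1_I$ for every proper $m \in \mathrm{Sub}(A)$. When $\mathrm{Sub}(A)=\{1_A\}$ the intersection in~\eqref{eq:7} is over the empty index set and evaluates to $I$; otherwise at least one kernel $\ker(1_I)=0$ appears and the intersection collapses to $0$. This is precisely the unit of~\eqref{eq:8}.

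For the tensor: at $A$ we have $(\Gamma F \otimes_{\mathrm{pt}} \Gamma G)(A) = \bigoplus_{B\leqslant_n A,\,B'\leqslant_{n'} A} FB \otimes GB'$, and for each proper $m \in \mathrm{Sub}(A)$ the idempotent $(\Gamma F \otimes_{\mathrm{pt}} \Gamma G)(m,m) = (\Gamma F)(m,m)\otimes (\Gamma G)(m,m)$ acts diagonally on this decomposition, projecting onto those summands indexed by pairs $(n,n')$ for which \emph{both} $n$ and $n'$ factor through $m$. Its kernel is therefore the direct sum of the summands where at least one of $n,n'$ fails to factor through $m$. Intersecting these kernels over all proper $m$ retains exactly those summands $(n,n')$ such that no proper $m \in \mathrm{Sub}(A)$ has both $n$ and $n'$ factoring through it, i.e.\ the \emph{covering} pairs, which reproduces~\eqref{eq:8}.

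The key step to justify carefully is the identification of the intersection~\eqref{eq:7} with that direct sum. This comes down to the fact that the idempotents $(\Gamma F)(m,m)\otimes (\Gamma G)(m,m)$ (for varying proper $m$) pairwise commute, being diagonal with respect to the same decomposition. Consequently their joint intersection of kernels is the image of the single commuting-product idempotent $\prod_{R<_m A}(1 - (\Gamma F)(m,m)\otimes (\Gamma G)(m,m))$, which is itself diagonal and takes value $1$ precisely on the covering summands; splitting this idempotent in the Karoubian category $\CX$ delivers the claimed direct sum. Symmetry when $\CX$ is symmetric, naturality in $\CR$-morphisms, and the strong monoidality of $\Gamma$ all emerge from the transport construction, so no additional verification is needed.
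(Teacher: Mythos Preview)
Your proof is correct and follows essentially the same strategy as the paper: transport the pointwise monoidal structure across the equivalence $\Gamma$, then compute $N(J_\CP)$ and $N(\Gamma F \otimes \Gamma G)$ on objects using the description of the idempotents $(\Gamma F)(m,m)$ and the kernel formula~\eqref{eq:7}. Your treatment is in fact slightly more explicit than the paper's---you spell out the unit case and the commuting-idempotent argument that underlies the intersection of kernels---but the underlying ideas are identical.
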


\begin{proof}
  It suffices to show that the pointwise tensor product on
  $[\CP, \CX]$ transports to the given structure on $[\CR, \CX]$. For
  the unit this is immediate from~\eqref{eq:7}. For the binary tensor,
  given $F,G \in [\CR, \CX]$, we form the pointwise tensor
  $H = \Gamma F \otimes \Gamma G$ with
  \begin{equation*}
    HA = (\Gamma F) A \otimes (\Gamma G)A = \bigoplus_{B \leqslant_n A} FB \otimes \bigoplus_{B' \leqslant_{n'} A} GB' \cong
    \bigoplus_{\substack{B \leqslant_n A\\ B' \leqslant_{n'} A}} FB \otimes GB'
  \end{equation*}
  on objects; we will not need the full definition on morphisms, but we
  see by the observations above that, for any $\CM$-map
  $m \colon R \rightarrowtail A$, the idempotent
  $H(m,m) \colon HA \rightarrow HA$ is given by projection onto those
  summands $(n,n') \in \mathrm{Sub}(A)^2$ for which both $n$ and $n'$
  factor through $m$. Comparing with~\eqref{eq:7}, we conclude that
  $(N H)(A)$ is given by restricting $HA$ to those direct summands
  $(n,n')$ which do \emph{not} have this property for any proper
  $m \in \mathrm{Sub}(A)$: which gives the formula for $F \ast G$
  displayed above.
\end{proof}

\begin{examples}
  \label{ex:1}
  \begin{enumerate}[(i)]
  \item Let $\CC$ be the category of finite sets and injections, and
    $(\CE, \CM)$ the (isomorphisms, all maps) factorisation system.
    Then $\CR$ is the groupoid $\mathfrak S$ of finite sets and
    bijections and $\CP$ is the category of finite sets and partial
    injections, denoted by $\mathrm{FI}\sharp$
    in~\cite{Church2015FI-modules}; the equivalence
    $[\mathfrak{S}, \CX] \rightarrow [\mathrm{FI}\sharp, \CX]$ for any
    additive Karoubian $\CX$ is now that of Theorem~4.1.5 of
    \emph{ibid}. When $\CX$ is an $\mathbf{Ab}$-algebra, the induced
    tensor product on $[\mathfrak S, \CX]$ is the case
    $\Lambda = \mathbb Z$ of~\eqref{Lten}, so that this tensor product
    corresponds to the pointwise tensor product of
    $\mathrm{FI}\sharp$-modules.\vskip0.25\baselineskip
  \item In a similar way, when $\CC$ is $\Delta^+_{\mathrm{inj}}$, the
    category of finite ordinals $[n]$ and monotone injections, and
    $(\CE, \CM) = (\text{identities}, \text{all maps})$, we have
    $\CR = \mathbb N$ and $\CP$ the category $\mathrm{FO}\sharp$ of
    finite ordinals and partial monotone injections; in this way, we
    re-find for any $\mathbf{Ab}$-algebra $\CX$ the case
    $\Lambda =\mathbb Z$ of the tensor product~\eqref{Nten} on
    $\CX^\mathbb N$, but now with the additional information that it
    is monoidally equivalent to the pointwise monoidal structure on
    $[\mathrm{FO}\sharp, \CX]$.\vskip0.25\baselineskip
  \item Take $\CC$ itself to be $\mathrm{FO}\sharp$, and let $\CE$ and
    $\CM$ comprise the maps therein with entire codomain and domain
    respectively. Then $\CR$ is isomorphic to
    $(\Delta^+_{\mathrm{inj}})^\mathrm{op}$, while $\CP$ is the cube
    category $\mathbb I$ of~\cite{Crans1995Pasting}, diagrams on which
    are the cubical sets\footnote{With degeneracies but without
      symmetries or connections.} of~\cite{Kan1955}. For any
    $\mathbf{Ab}$-algebra $\CX$, the pointwise product on cubical
    objects in $\CX$ thus transports to a Hurwitz-style tensor product
    on the category $[(\Delta^+_\mathrm{inj})^\mathrm{op}, \CX]$ of
    augmented semi-simplicial objects in $\CX$.\vskip0.25\baselineskip
  \item Let $\CC$ be the category of finite sets and \emph{all} maps,
    and $(\CE, \CM)$ the (epi, mono) factorisation system. In this
    case, $\CP$, the category of finite sets and partial maps, is
    isomorphic to Segal's category $\Gamma$ of finite pointed sets,
    while $\CR$ is the category $\Omega$ of finite sets and
    epimorphisms. The equivalence
    $[\Omega, \CX] \rightarrow [\Gamma, \CX]$ for any additive
    Karoubian $\CX$ was described
    in~\cite[Theorem~3.1]{Pirashvili2000Dold-Kan}, and our
    construction yields a Hurwitz-style tensor product on
    $[\Omega, \CX]$ for any $\mathbf{Ab}$-algebra $\CX$.
  \end{enumerate}
\end{examples}

We now summarise the analogue of the preceding results for the general
case of~\eqref{eq:2} as described in~\cite{Lack2015Combinatorial}. The
basic data are a category $\CP$; a subcategory $\CM$ containing all
the isomorphisms; and an identity-on-objects functor
$(\thg)^\ast \colon \CM^\mathrm{op} \rightarrow \CP$ such that
$m^\ast \circ m = 1$ for every $m \in \CM$ (so in particular, each $m$ is
a split monomorphism in $\CC$). The class of morphisms $\CR$ is
defined to comprise those $r \in \CP$ such that, if
$r = m \circ x \circ n^\ast$ for any $m,n \in \CM$, then $m$ and $n$
are invertible. The category $\CD$ has the same objects as $\CP$, and
as morphisms the maps in $\CR$ with a zero morphism freely adjoined
between any pair of objects; if $r, s \in \CR$ are composable
morphisms in $\CD$, then their composite is $s \circ r$ if this also
lies in $\CR$, and is zero otherwise.

These data are required to satisfy various Assumptions which are
listed in~\cite[Section~2]{Lack2015Combinatorial}; one of these is
that every morphism $f \in \CP$ factors as
$f = n \circ r \circ m^\ast$, uniquely up to isomorphism, for
$m, n \in \CM$ and $r \in \CR$, while another is that each set
$\mathrm{Sub}(A)$ of $\CM$-subobjects is finite. Given these data,
Theorem~6.7 of \emph{ibid}.\ defines the equivalence $\Gamma$
of~\eqref{eq:2} as follows. For $F \in [\CD, \CX]_\mathrm{pt}$, we
take $\Gamma F \colon \CP \rightarrow \CX$ to be given on objects as
before by
\begin{equation*}
  (\Gamma F)A = \bigoplus_{B \leqslant_n A} FB\rlap{ ,}
\end{equation*}
where the sum is again over the (finite) set of $\CM$-subobjects of
$A$. For its value at a map $f \colon A \rightarrow A'$ in $\CP$,
suppose that $B \leqslant_n A$ and that $B' \leqslant_{n'} A'$, and
define $\xi(f)_{nn'} \colon FB \rightarrow FB'$ to be $Fr$ if there is
a (necessarily unique) diagram of the form
\begin{equation*}
  \vcenter{\hbox{\xymatrix{
    B \ar@{ >->}[d]_-{n} \ar@{->}[r]^-{r \in \CR} & B' \ar@{ >->}[d]^-{n'}\\
    A \ar[r]^-{f} & A' 
  }}}
\end{equation*}
and to be zero otherwise; we now take $(\Gamma F)(f)$ to be the matrix
with entries $\xi(f)_{nn'}$. It follows from the assumptions that, for
any $m \in \CM$, the map $(\Gamma F)(mm^\ast)$ is the idempotent on
$\bigoplus_{B \leqslant_n A} FB$ projecting onto those summands
$n \in \mathrm{Sub}(A)$ for which $m^\ast n \in \CM$. The
pseudoinverse $N$ to $\Gamma$ is once again defined by the
formula~\eqref{eq:7} (with $H(mm^\ast)$ replacing $H(m,m)$) and now
tracing through the remainder of the argument given above yields the
following more general version of Proposition~\ref{prop:8}. When
interpreting~\eqref{eq:8} in this context, we say that
$n, n' \in \mathrm{Sub}(A)$ are \emph{covering} if whenever
$m \in \CM$ is such that $m^\ast n$ and $m^\ast n'$ are both in $\CM$,
then $m$ is invertible.

\begin{proposition}\label{prop:8}
  Suppose given a category $\CP$, subcategory $\CM$, and
  identity-on-objects
  $(\thg)^\ast \colon \CM^\mathrm{op} \rightarrow \CP$ satisfying the
  Assumptions of~\cite{Lack2015Combinatorial}, and let $\CD$ be the
  associated category with zero maps as above. For any (symmetric)
  $\mathbf{Ab}$-algebra $\CX$ there is a (symmetric)
  $\mathbf{Ab}$-algebra structure $(\ast, J)$ on
  $[\CD, \CX]_\mathrm{pt}$ defined as in~\eqref{eq:8}. Moreover,
  \eqref{eq:2} is strong monoidal as a functor
  $\Gamma \colon ([\CD, \CX]_\mathrm{pt}, \ast, J) \rightarrow ([\CP,
  \CX], \text{pointwise})$.
\end{proposition}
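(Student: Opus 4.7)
The plan is to mimic the proof of Proposition~\ref{prop:7}: since $\Gamma \colon [\CD, \CX]_\mathrm{pt} \to [\CP, \CX]$ is an equivalence and $[\CP, \CX]$ carries the pointwise (symmetric) $\mathbf{Ab}$-algebra structure, I can transport this structure across $\Gamma$ to obtain a (symmetric) $\mathbf{Ab}$-algebra structure on $[\CD, \CX]_\mathrm{pt}$; by construction, $\Gamma$ is then strong monoidal. The substantive task is to identify the transported structure with the explicit formulae in~\eqref{eq:8}, for which I use the pseudoinverse $N$ described by~\eqref{eq:7} (with $H(mm^\ast)$ in place of $H(m,m)$).

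For the unit, let $H \in [\CP, \CX]$ be the constant functor at $I$, i.e. the pointwise unit. Then $H(mm^\ast) = 1_I$ for every proper $m$, so each $\ker H(mm^\ast) = 0$, whence $(NH)(A) = \bigcap_{R <_m A} 0 = 0$ as soon as $A$ has any proper $\CM$-subobject. If instead $|\mathrm{Sub}(A)| = 1$ the intersection is empty and equals $HA = I$. This reproduces $J$. For the binary tensor, given $F, G \in [\CD, \CX]_\mathrm{pt}$ I form the pointwise product $H := \Gamma F \otimes \Gamma G$, whose value on objects is
\begin{equation*}
HA \;\cong\; \bigoplus_{B \leqslant_n A,\; B' \leqslant_{n'} A} FB \otimes GB'\rlap{ .}
\end{equation*}
For each proper $m \in \mathrm{Sub}(A)$, the idempotent $H(mm^\ast) = (\Gamma F)(mm^\ast) \otimes (\Gamma G)(mm^\ast)$ is — by the description of $\Gamma$ on morphisms $mm^\ast$ recalled just before the statement of Proposition~\ref{prop:8} — the projection onto those summands $(n,n')$ for which \emph{both} $m^\ast n \in \CM$ and $m^\ast n' \in \CM$. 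Consequently $(NH)(A)$, the joint kernel, is the sum of those $(n,n')$ for which no proper $m$ enjoys this property simultaneously, i.e.\ for which $n$ and $n'$ are covering in the sense defined before the statement; this yields the formula for $F \ast G$ displayed in~\eqref{eq:8}. Symmetry of the tensor, when $\CX$ is symmetric, is inherited from the pointwise symmetry via the transport.

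The main obstacle is precisely the identification of $H(mm^\ast)$ as a projection onto covering pairs in the general setting; it hinges on the assertion, following from the Assumptions of~\cite{Lack2015Combinatorial}, that $(\Gamma F)(mm^\ast)$ projects onto those $n \in \mathrm{Sub}(A)$ satisfying $m^\ast n \in \CM$. This is exactly the sentence recorded in the paragraph preceding the proposition, so once it is invoked the rest of the argument is a direct transcription of the proof of Proposition~\ref{prop:7}, with the phrase "$n$ factors through $m$" replaced throughout by "$m^\ast n \in \CM$". The strong monoidality of $\Gamma$ then requires no separate verification, being a formal consequence of the transport of structure.
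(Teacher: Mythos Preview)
Your proposal is correct and follows exactly the approach the paper intends: the paper does not give a separate proof of this proposition but explicitly says that ``tracing through the remainder of the argument given above'' (i.e., the proof of Proposition~\ref{prop:7}) yields the result, and that is precisely what you do. Your identification of the key input---that $(\Gamma F)(mm^\ast)$ is the projection onto summands $n$ with $m^\ast n \in \CM$, as recorded just before the statement---and your substitution of ``$m^\ast n \in \CM$'' for ``$n$ factors through $m$'' throughout the earlier argument are exactly the modifications the paper has in mind.
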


The main additional example that this more general case allows is the
following one. If we take $\CP$ to be $\Delta^\mathrm{op}$, for
$\Delta$ the category of non-empty finite ordinals and monotone maps,
take $\CM$ to comprise the surjective monotone maps, and for each
$m \in \CM$ take $m^\ast$ to be its right adjoint, then the
assumptions of~\cite{Lack2015Combinatorial} may be verified to hold as
in Example~3.3 of \emph{ibid}. In this case, the category $\CD$ turns
out to be the indexing category for chain complexes, and the
equivalence~\eqref{eq:2} the classical Dold--Kan equivalence between
simplicial objects and chain complexes in any additive Karoubian
$\CX$. The preceding proposition now describes for any
$\mathbf{Ab}$-algebra $\CX$ a tensor product of chain complexes in
$\CX$ given by
\begin{equation*}
  (A \ast B)(n) = \bigoplus_{\substack{\sigma \colon [k] \twoheadleftarrow [n] \twoheadrightarrow [\ell] \colon \tau \\ \sigma, \tau \text{ jointly monic}}} Ak \otimes B\ell\rlap{ .}
\end{equation*}
This tensor product was calculated explicitly in unpublished work \cite{LackTalk}
of Lack and Hess; the key point is that $A \ast B$ is a well-behaved
retract of the usual tensor product $A \otimes B$ of chain complexes.
This can be understood as part of the fact that the equivalence
$N \colon [\Delta^\mathrm{op}, \CX] \rightarrow \mathbf{Ch}(\CX)$ is a
Frobenius monoidal functor, as explained in
\cite[Chapter~5]{AguiarMahajan2010}.

\section{Dold--Kan equivalences from small coalgebras}
\label{sec:dold-kan-equiv}

We conclude this paper by discussing coalgebraic aspects of the
equivalences described in the previous section. To this end, we
consider the $2$-category $\mathbf{Ab}\text-\mathbf{Cat}_{cc}$ whose
objects are additive Karoubian categories, and whose $1$- and
$2$-cells are $\mathbf{Ab}$-enriched functors and transformations.
$\mathbf{Ab}\text-\mathbf{Cat}_{cc}$ is a symmetric monoidal biclosed
bicategory: the tensor product is obtained by Cauchy completing the
usual $\mathbf{Ab}$-categorical tensor product, while the internal hom
is the standard $\mathbf{Ab}$-enriched functor category. We can thus
talk about (symmetric) monoidales and comonoidales in
$\mathbf{Ab}\text-\mathbf{Cat}_{cc}$; the monoidales are the
$\mathbf{Ab}$-algebras considered previously, while the comonoidales
we refer to as \emph{$\mathbf{Ab}$-coalgebras}. Much as in
Section~\ref{sec:lambda-weight-tens}, we have a free--forgetful
biadjunction
$\mathbf{Ab}\text-\mathbf{Cat}_{cc} \leftrightarrows \mathbf{Cat}$
whose left biadjoint is a strong monoidal homomorphism, and we reuse
the notation $\spn{\CA}$ for the free additive Karoubian category on
$\CA \in \mathbf{Cat}$. If $\CA$ is a category with zero morphisms,
then we write $\spn{\CA}_\mathrm{pt}$ for the free additive Karoubian
category on $\CA$ \emph{qua} category with zero morphisms.

Suppose now that we are given a category $\CP$, a subcategory $\CM$
and a functor $(\thg)^\ast \colon \CM^\mathrm{op} \rightarrow \CP$
satisfying the assumptions of~\cite{Lack2015Combinatorial}, and let
$\CD$ be as before the associated category with zero maps. We may see
the equivalences $\Gamma \colon [\CD, \CX]_\mathrm{pt} \to [\CP, \CX]$
of~\eqref{eq:2} for any additive Karoubian $\CX$ as induced by
precomposition with an equivalence
$\spn{\CP} \rightarrow \spn{\CD}_\mathrm{pt}$ of additive Karoubian
categories, defined on generating objects by
$A \mapsto \bigoplus_{B \leqslant_n A} B$. The point we wish to make
is that, for several of our examples above, the
$\mathbf{Ab}$-categories $\spn{\CP}$ and $\spn{\CD}_\mathrm{pt}$ are
$\mathbf{Ab}$-algebras which are free on pointed additive Karoubian
categories, and the equivalence between them generated by an equivalence 
at this more primitive level. For our first example of this, we consider the
equivalence $[\mathfrak S, \CX] \simeq [\mathrm{FI}\sharp, \CX]$ of
Examples~\ref{ex:1}(i). We write $\mathbf{SE}$ for the \emph{free
  split epimorphism} as displayed in:
\begin{equation*}
  \xymatrix{
    E \ar[r]^{m} \ar[dr]_{1} & D \ar[d]^{e} \\
    & E \rlap{ .}
  }
\end{equation*}

\begin{proposition}
  \label{prop:9}
  The free symmetric $\mathbf{Ab}$-algebras on the pointed additive
  Karoubian categories $\spn{E} \rightarrow \spn{E,D}$ and
  $\spn{E} \rightarrow \spn{\mathbf{SE}}$ are respectively
  $\spn{\mathfrak S}$ and $\spn{\mathrm{FI}\sharp}$ under the monoidal
  structures given on basis elements by disjoint union.
\end{proposition}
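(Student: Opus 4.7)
The strategy is to reduce both claims to statements about ordinary (non-enriched) categories by exploiting the fact that the free additive Karoubian $2$-functor $\spn{\thg} \colon \mathbf{Cat} \to \mathbf{Ab}\text-\mathbf{Cat}_{cc}$ is a strong symmetric monoidal left biadjoint---the analogue for this setting of the biadjunction discussed just before Proposition~\ref{prop:5}. As in that earlier argument, this preservation means it will suffice to show that the free symmetric strict monoidal categories on the ordinary pointed categories $\{E\} \to \{E,D\}$ and $\{E\} \to \mathbf{SE}$ are, respectively, $(\mathfrak{S}, +, \emptyset)$ and $(\mathrm{FI}\sharp, +, \emptyset)$; applying $\spn{\thg}$ then yields the claim.

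The first of these facts is exactly what was used in Proposition~\ref{prop:5}. For the second, the plan is to verify the universal property of $(\mathrm{FI}\sharp, +, \emptyset)$ by hand. A pointed functor from $\{E\} \to \mathbf{SE}$ into a symmetric strict monoidal category $(\CM, \otimes, I)$ is precisely a triple $(X, u \colon I \to X, c \colon X \to I)$ with $cu = 1_I$. The choice $\CM = \mathrm{FI}\sharp$ with $X = \{\ast\}$ and $u, c$ the only partial injections between $\emptyset$ and $\{\ast\}$ gives a canonical such triple, whose adjoint transpose under the free-forgetful biadjunction yields a strict symmetric monoidal comparison functor $K$ from the free symmetric strict monoidal category on $\{E\} \to \mathbf{SE}$ to $\mathrm{FI}\sharp$. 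To see $K$ is an isomorphism, I would exhibit a strict symmetric monoidal functor $L \colon \mathrm{FI}\sharp \to \CM$ associated to each triple $(X, u, c)$ in $\CM$: on objects $n \mapsto X^{\otimes n}$, and on a partial injection $f \colon n \to m$ with domain $S \subseteq n$ and image $T = f(S) \subseteq m$ as the composite built from $c^{\otimes (n \setminus S)}$ (deleting elements outside $S$), the symmetry isomorphism $X^{\otimes S} \cong X^{\otimes T}$ determined by the bijection $f|_S \colon S \to T$, and $u^{\otimes (m \setminus T)}$ (creating elements outside $T$).

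The main obstacle will be verifying that $L$ is functorial, and this is exactly where the relation $cu = 1_I$ plays its essential role. When computing $L(g) \circ L(f)$ for $f \colon n \to m$ and $g \colon m \to p$, each element $i \in m$ that is created by $u$ because $i \notin f(S)$ and then destroyed by $c$ because $i \notin \mathrm{dom}(g)$ cancels via $cu = 1_I$; the remaining three cases (kept-then-kept, kept-then-deleted, and created-then-kept) are straightforward bookkeeping, and together they yield precisely $L(g \circ f)$. Once functoriality of $L$ is in hand, the identities $K \circ L = 1_{\mathrm{FI}\sharp}$ and $L \circ K = 1$ follow immediately: both composites fix the generating pointed-category data, so are forced to be the identity by the universal property of the free symmetric strict monoidal category together with the (now-established) fact that every partial injection is a composite of $u$'s, $c$'s, identities, and symmetries. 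Applying $\spn{\thg}$ then delivers both halves of the proposition.
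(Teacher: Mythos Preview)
Your proposal is correct and follows essentially the same approach as the paper: reduce to ordinary categories via the strong symmetric monoidal left biadjoint $\spn{\thg}$, invoke Proposition~\ref{prop:5} for the first claim, and identify $\mathrm{FI}\sharp$ as the free symmetric monoidal category on $\{E\} \to \mathbf{SE}$ for the second. The only difference is one of detail: the paper simply asserts this last identification as an observation, whereas you supply an explicit verification by constructing the extension $L$ and checking functoriality using $cu = 1_I$.
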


\begin{proof}
  The first claim follows as in Proposition~\ref{prop:5} above, and
  the second does too once we observe that $\mathrm{FI}\sharp$ is the
  free symmetric monoidal category on the pointed category
  $\{E\} \rightarrow \mathbf{SE}$, with the generators $E$ and $D$
  corresponding to the empty and singleton sets $0$ and $1$.
\end{proof}

It is straightforward to see that $\spn{E, D}$ and $\spn{\mathbf{SE}}$
are equivalent as pointed $\mathbf{Ab}$-categories. In one direction,
we have the $\mathbf{Ab}$-functor
$\Theta \colon \spn{\mathbf{SE}} \rightarrow \spn{E,D}$ classifying the
split epimorphism $\pi_1 \colon E \oplus D \rightarrow E$ with section
$\iota_1 \colon E \rightarrow E \oplus D$. In the other, we have the
$\mathbf{Ab}$-functor $\spn{E,D} \rightarrow \spn{\mathbf{SE}}$
picking out the pair of objects $E$ and
$\mathrm{ker}(e \colon D \rightarrow E)$ (here, as before, this kernel
can be constructed as a splitting of the idempotent $1-me$ in
$\spn{\mathbf{SE}}$). Applying Proposition~\ref{prop:9}, we deduce the
existence of a strong monoidal equivalence $\Theta_\infty$ fitting into a
pseudocommutative square
\begin{equation*}
  \xymatrix{
    {\spn{\mathbf{SE}}} \ar[r]^-{\Theta} \ar[d]_{} &
    {\spn{E,D}} \ar[d]^{} \\
    {\spn{\mathrm{FI}\sharp}} \ar[r]^-{\Theta_\infty} &
    {\spn{\mathfrak S}\rlap{ ,}}
  }
\end{equation*}
and so by composing with $\Theta_\infty$ an equivalence
$[\mathfrak S, \CX] \simeq [\mathrm{FI}\sharp, \CX]$, which a direct
analysis shows is precisely the equivalence of~\eqref{eq:6}.

Moreover, the induced (symmetric) $\mathbf{Ab}$-algebra structures on
$[\mathfrak S, \CX]_\mathrm{pt}$ and $[\mathrm{FI}\sharp, \CX]$ when
$\CX$ is a (symmetric) $\mathbf{Ab}$-algebra may be seen as induced by
convolution with symmetric $\mathbf{Ab}$-coalgebra structures on
$\spn{\mathfrak S}$ and $\spn{\mathrm{FI}\sharp}$. As before, these
structures make $\spn{\mathfrak S}$ and $\spn{\mathrm{FI}\sharp}$ into
symmetric \emph{$\mathbf{Ab}$-bialgebras}; and as before, these
symmetric bialgebra structures are in fact freely generated from
pointed $\mathbf{Ab}$-coalgebra structures on $\spn{E,D}$ and
$\spn{\mathbf{SE}}$ respectively. Explicitly, we equip $\spn{E,D}$
with the diagonal coalgebra structure---so
$\varepsilon(E) = \varepsilon(D) = \mathbb Z$ and
$\delta(E) = E \otimes E$, $\delta(D) = D \otimes D$---and endow
$\spn{\mathbf{SE}}$ with the coalgebra structure given on generating
objects by
\begin{equation*}
  \varepsilon(E)= \mathbb Z \ , \ \varepsilon(D)=0\ , \ \delta(E)=E\otimes E \ , \ \delta(D)=D\otimes E \oplus E\otimes D \oplus D\otimes D
\end{equation*}
and on generating morphisms in the unique possible manner. It is quite
straightforward to see that, with respect to these structures, $\Theta$
becomes a strong morphism of $\mathbf{Ab}$-coalgebras, and so that, by
the argument of Section~\ref{sec:lambda-weight-tens} above,
$\Theta_\infty$ is an equivalence of symmetric $\mathbf{Ab}$-bialgebras.
As in Theorem~\ref{thm:1} above, the induced coalgebra structure on
$\spn{\mathfrak S}$ is the diagonal one, so that the induced monoidal
structure on $[\mathfrak S, \CX]$ is pointwise. By uniqueness of
transport of monoidal structure, it follows that the coalgebra
structure on $\spn{\mathrm{FI}\sharp}$ must be the one which, under
convolution, induces the Hurwitz product on each
$[\mathrm{FI}\sharp, \CX]$. In the situation just discussed, we may
also take free \emph{non-symmetric} monoidales; whereupon the
equivalence $\Theta \colon \spn{E, D} \rightarrow \spn{\mathbf{SE}}$ of
pointed $\mathbf{Ab}$-coalgebras yields an equivalence of
$\mathbf{Ab}$-bialgebras
$\spn{\mathbb N} \rightarrow \spn{\mathrm{FO}\sharp}$, inducing the
(monoidal) equivalences of Examples~\ref{ex:1}(ii).

Finally, let us show how Examples~\ref{ex:1}(iii) may be obtained in a
corresponding manner. If we consider the arrow category
$\mathbf 2 = \{f \colon D \rightarrow E\}$ and the free category
$\mathbf{G}$ on a reflexive graph, as to the left in:
\begin{equation*}
  \xymatrix{
    E \ar[r]^{i} \ar[dr]_{1} & D \ar@<3pt>[d]^-{t} \ar@<-3pt>[d]_-{s} \\
    & E 
  } \qquad \qquad 
  \xymatrix@+0.3em{
    E \ar[r]^-{\iota_1} \ar[dr]_{1} & E \oplus D \ar@<3pt>[d]^(0.4){(-1 \ f)} \ar@<-3pt>[d]_(0.4){(1\ 0)} \\
    & E \rlap{ .}
  }
\end{equation*}
As is well-known, there is an equivalence of additive Karoubian
categories $\Theta \colon \spn{\mathbf{G}} \rightarrow \spn{\mathbf 2}$
whose action on generators picks out the reflexive graph as right
above. With each category pointed by the object $E$, it is easy to see
that the free monoidal categories thereon are given by
$(\Delta^+_\mathrm{inj})^\mathrm{op}$ and the cube category
$\mathbb I$ respectively, and so we obtain an equivalence of
$\mathbf{Ab}$-algebras
$\spn{(\Delta^+_\mathrm{inj})^\mathrm{op}} \simeq \spn{\mathbb I}$
inducing the equivalences of Examples~\ref{ex:1}(iii) above. Once
again, the equivalence between $\spn{\mathbf G}$ and $\spn{\mathbf 2}$
may be made into one of pointed $\mathbf{Ab}$-coalgebras in such a way
that, on passing to the associated free $\mathbf{Ab}$-bialgebras, we
reconstruct the monoidal equivalence
$[\mathbb I, \CX] \simeq [(\Delta_\mathrm{inj}^+)^\mathrm{op}, \CX]$
for any $\mathbf{Ab}$-algebra $\CX$.

\begin{center}
--------------------------------------------------------
\end{center}

\appendix

\end{document}